\newcommand{\kk}{\Bbbk}
\def\SL{\operatorname{SL}}
\def\SL2{\operatorname{SL}_{2}(K)}
\def\GL2{\operatorname{GL}_{2}(K)}
\def\INVSL2{$K[V]^{operatorname{SL}_{2}(K)}$}
\def\INVSO2{$K[V]^{operatorname{SO}_{2}(K)}$}
\def\INVGL2{$K[V]^{operatorname{GL}_{2}(K)}$}
\def\qed{\hfill $\Box$}
\def\GL{\operatorname{GL}}
\def\SL{\operatorname{SL}}
\def\id{\operatorname{id}}
\newtheorem{Lemma}{Lemma}
\newtheorem{Theorem}[Lemma]{Theorem}
\newtheorem{Proposition}[Lemma]{Proposition}
\newtheorem{Corollary}[Lemma]{Corollary}
\newtheorem{Prop}[Lemma]{Proposition}
\newtheorem*{Corollary of Conjecture}{Corollary of Conjecture}
\theoremstyle{definition}
\theoremstyle{remark}
  \newtheorem{rem}[Lemma]{Remark}
\newtheorem{eg}[Lemma]{Example}
\newtheoremstyle{Acknowledgments}
  {}
    {}
     {}
     {}
    {\bfseries}
    {}
     {.5em}
     {\thmname{#1}\thmnumber{ }\thmnote{ (#3)}}
\theoremstyle{Acknowledgments}
\title{Locally finite derivations and modular Coinvariants  }
\author{Jonathan Elmer}
\address{University of Aberdeen\\
King's College, Aberdeen\\
AB24 3UE}
\email{j.elmer@abdn.ac.uk}
\author{M\"{u}fit Sezer}
\address{Bilkent University, Department of Mathematics\\
Cankaya, Ankara \\06800 Turkey } \email{sezer@fen.bilkent.edu.tr}
\thanks{The second author is supported by a grant from T\"UBITAK:114F427 }
\date{\today}
\subjclass[2010]{13A50}
\keywords{Invariant theory, coinvariants, prime characteristic}
\begin{document}
\maketitle

\begin{abstract}
We consider a finite dimensional $\kk
   G$-module $V$ of a $p$-group
$G$ over a field $\kk$ of characteristic $p$. We describe a
generating set for the corresponding Hilbert Ideal. In case $G$ is
cyclic this yields that the algebra $\kk[V]_G$ of coinvariants is
a free module over its subalgebra generated by $\kk G$-module
generators of $V^*$. This subalgebra is a quotient of a polynomial
ring by pure powers of its variables. The coinvariant ring was
known to have this property only when $G$ was cyclic of prime
order,  \cite{SezerCoinv}. In addition, we show that if $G$
  is the Klein 4-group and $V$ does not contain an indecomposable summand isomorphic to the
   regular module, then the Hilbert Ideal  is a complete
   intersection, extending a result of the second author and R. J. Shank \cite{SezerShank}.
\end{abstract}
\section{Introduction}

Let $\kk$ be a field of positive characteristic $p$ and $V$ a
finite-dimensional  $\kk$-vector space, and $G \leq \GL(V)$ a
finite group. Then the induced action  on $V^*$  extends to the
symmetric algebra $\kk[V]:=S(V^*)$ by the formula $\sigma
(f)=f\circ \sigma^{-1}$ for $\sigma \in G$ and $f\in \kk[V]$. The
ring of fixed points $\kk[V]^G$ is called the \emph{ring of
invariants}, and is the central object of study in invariant
theory. Another object which is often studied is the \emph{Hilbert
Ideal}, $\mathcal{H}$, which is defined to be the ideal of
$\kk[V]$ generated by invariants of positive degree, in other
words
\[\mathcal{H} = \kk[V]^G_+\kk[V].\]
In this article we study the quotient $\kk[V]_G:= \kk[V]/\mathcal{H}$ which is called the algebra of \emph{coinvariants}. An equivalent definition is $\kk[V]_G:= \kk[V] \otimes_{\kk[V]^G} \kk$, which shows that this object is, in a sense, dual to $\kk[V]^G$.

As $\kk[V]_G$ is a finite-dimensional $\kk G$-module, it is generally easier to handle than the ring of invariants. On the other hand, much information about $\kk[V]^G$ is encoded in $\kk[V]_G$. For example, Steinberg
 \cite {MR0167535} famously showed that $\dim(\mathbb{C}[V]_G) = |G|$ if and only if $(G,V)$ is a complex reflection group. Combined with the theorem of Chevalley \cite{MR0072877}, Shephard and Todd \cite{MR0059914}, this shows that
 $\dim(\mathbb{C}[V]_G) = |G|$ if and only if $\mathbb{C}[V]^G$ is a polynomial ring. Smith \cite{MR1972694} later showed that this holds independently of the ground field. Further, the polynomial property
 of $\mathbb{C}[V]^G$ is equivalent to the Poincar\'{e}
 duality property of $\mathbb{C}[V]_G$,
 by Kane \cite{MR1261561} and Steinberg \cite{MR0167535}.

Before we continue we fix some terminology. Let $x_0, \dots , x_n$
be a basis for  $V^*$. We will say $x_i$ is a \emph{terminal
variable} if the vector space spanned by the other variables is a
$\kk G$-submodule of $V^*$. Note that if $G$ is a $p$-group, then
 $V^G \neq 0$ and there is a choice of  a basis for $V$ that contains a
 fixed point. Then
 the dual element corresponding to the fixed point is a terminal
 variable in the basis consisting of dual elements of this basis.
 For any $f \in \kk[V]$ we define the norm $$N^G(f) = \prod_{h \in G \cdot f} h.$$
For every terminal variable $x_i$, we choose a polynomial $N(x_i)$ in $\kk[V]^G$ which, when viewed as a polynomial in
$x_i$ is monic of minimal positive  degree. While $N(x_i)$ is not
unique in general, its degree is well-defined. Since $N^G(x_i)$ is
monic of degree $[G:G_{x_i}]$ the degree of $N(x_i)$ is bounded
above by this number. By ``degree of $x_i$'' we understand degree of
$N(x_i)$ as a polynomial in $x_i$ and denote it by $\deg (x_i)$.
We will show that the degree of a terminal variable is always a
$p$-power.

 The algebras of modular coinvariants for cyclic groups
of order $p$ were studied by the second author \cite{SezerCoinv}
and previously by the second author and Shank \cite{MR2193198}.
Note that there is a choice of basis such that an indecomposable
representation of a $p$-group is afforded by an upper  triangular
matrix with 1's on the diagonal and the bottom variable  is a
terminal variable. In \cite{SezerCoinv} the following was proven.

\begin{Prop} Let $G$ be a cyclic group of order $p$ and $V$ a $\kk G$-module that contains $k+1$ non-trivial summands.   Choose a basis $x_0,x_1,\ldots ,x_n$ in which the variables $x_0,x_1,\ldots ,x_k$
are the bottom variables of the respective Jordan blocks,    and
let
 $A$ be the $\kk G$-subalgebra of $\kk[V]$ generated by $x_{k+1}, \ldots ,x_n$. Denote the image of $x_i$ in $\kk[V]_G$ by $X_i$. Then:

\begin{enumerate}
\item The Hilbert Ideal of $\kk[V]^G$ is generated by
$N^G(x_0),N^G(x_1),\ldots,N^G(x_k)$, and polynomials in $A$. \item
$\kk[V]_G$ has dimension divisible by $p^{k+1}$. \item $\kk[V]_G$
is free as a module over its subalgebra $\mathcal{T}$ generated by
$X_0,X_1,\ldots,X_k$. \item $\mathcal{T} \cong
\kk[t_0,\ldots,t_k]/(t_0^p,\ldots,t_k^p)$, where $t_0, \dots ,t_k$
are independent variables.
 \end{enumerate}
\end{Prop}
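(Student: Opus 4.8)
The plan is to prove part~(1) first --- this carries essentially all the difficulty --- and then to deduce parts~(2)--(4) from it by commutative algebra. Write $G=\langle\sigma\rangle$, $\mathcal{H}=\kk[V]^G_+\kk[V]$ for the Hilbert ideal, and $\overline A=\langle x_{k+1},\dots,x_n\rangle$, so that $A=\kk[\overline A]$ and $\kk[V]=A[x_0,\dots,x_k]$ as a polynomial ring; here ``polynomials in $A$'' in~(1) is read as elements of the subring $A$, not necessarily $G$-invariant.

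\emph{The norms.} Since $\overline A$ is a $G$-submodule, $(\sigma-1)^s(x_i)\in\overline A$ for all $i\le k$ and $s\ge1$, so $N^G(x_i)=\prod_{j=0}^{p-1}\sigma^j(x_i)=\prod_{j=0}^{p-1}\bigl(x_i+\sum_{s\ge1}\binom{j}{s}(\sigma-1)^s(x_i)\bigr)$; as a polynomial in $x_i$ this is monic of degree $p$ with every lower coefficient a symmetric function of linear forms in $\overline A$, hence an element of $A_+$ (in particular $\deg(x_i)\le p$, and equality holds since $x_i$ is not fixed and $\deg(x_i)$ is a $p$-power). Consequently, by Euclidean division in each variable, every element of $\kk[V]$ is congruent modulo $\bigl(N^G(x_0),\dots,N^G(x_k)\bigr)$ to one of degree $<p$ in each of $x_0,\dots,x_k$.

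\emph{Reduction of invariants.} Set $J=\bigl(N^G(x_0),\dots,N^G(x_k)\bigr)+(\mathcal{H}\cap A)\kk[V]$; then $J\subseteq\mathcal{H}$ is clear, and~(1) is the reverse inclusion $\kk[V]^G_+\subseteq J$. Let $f\in\kk[V]^G$ be homogeneous of positive degree; by the last paragraph we may assume $f$ has degree $<p$ in each $x_i$, $i\le k$. Viewing $f=\sum_\gamma c_\gamma x^\gamma$ as a polynomial over $A$ in $x_0,\dots,x_k$ (so $c_\gamma\in A$) and comparing the coefficients of the monomials of top degree on the two sides of $\sigma f=f$ shows those $c_\gamma$ to be $G$-invariant. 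Moreover $f$ cannot be a nonconstant invariant lying in $\kk[x_0,\dots,x_k]$ with every $x_i$-degree $<p$: the identity $f(x_0+a_0,\dots,x_k+a_k)=f$ with $a_i=(\sigma-1)(x_i)$, together with the linear independence of $a_0,\dots,a_k$ (they lie in distinct indecomposable summands of $\overline A$), forces $f\in\kk$ by a Hasse-derivative argument. Hence either $f\in A$, so $f\in\mathcal{H}\cap A\subseteq J$, or the coefficients $c_\gamma$ of the top-degree monomials lie in $A^G_+\subseteq\mathcal{H}\cap A$, and then the sum of those top-degree terms lies in $A^G_+\kk[V]\subseteq J$. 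It remains to descend from this top part to all of $f$: the non-invariant remainder must be re-expressed as an invariant of strictly smaller degree in the $x_i$ plus an element of $J$, which forces one to produce the extra generators of $\mathcal{H}\cap A$ beyond $A^G_+$. This is best organised through the classical fact that, for $G=\Z/p$, $\kk[V]^G$ is generated as a $\kk$-algebra by the norms, by $A^G$, and by transfers $\operatorname{Tr}^G(m)=\sum_{j=0}^{p-1}\sigma^j(m)$ of monomials, so that it suffices to show $\operatorname{Tr}^G(m)\in J$; this transfer bookkeeping --- or, alternatively, an induction on the number of non-trivial summands that splits off one Jordan block at a time --- is the step I expect to be the main obstacle.

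\emph{Deducing (2)--(4).} Granting~(1) and writing $R:=A/(\mathcal{H}\cap A)$, the decomposition $\kk[V]=A[x_0,\dots,x_k]$ yields $\kk[V]_G=R[x_0,\dots,x_k]\big/\bigl(\overline{N^G(x_0)},\dots,\overline{N^G(x_k)}\bigr)$, where $R$ is a finite-dimensional graded local $\kk$-algebra (a quotient of the coinvariant algebra $\kk[\overline A]_G$) and, by the norm description, $\overline{N^G(x_i)}=x_i^p+\sum_{t<p}\bar c_{i,t}x_i^t$ with $\bar c_{i,t}\in R_+$ --- monic of degree $p$ in $x_i$, involving no other $x_j$. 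Hence $\kk[V]_G$ is the tensor product over $R$ of the rings $R[x_i]/(\overline{N^G(x_i)})$, each $R$-free of rank $p$, so $\kk[V]_G$ is $R$-free of rank $p^{k+1}$; in particular $p^{k+1}\mid\dim_\kk\kk[V]_G$, proving~(2). For~(3) and~(4) one needs in addition that $X_i^p=0$ in $\kk[V]_G$, equivalently that each lower coefficient $c_{i,t}$ of $N^G(x_i)$ lies in $\mathcal{H}\cap A$; this must be read off from the explicit generators of $\mathcal{H}\cap A$ found in the proof of~(1) (note that $c_{i,t}$ is not itself $G$-invariant, so this does not follow formally from $N^G(x_i)\in\mathcal{H}$). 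Granting it, $\overline{N^G(x_i)}=x_i^p$, so $\kk[V]_G\cong R\otimes_\kk\bigl(\kk[x_0,\dots,x_k]/(x_0^p,\dots,x_k^p)\bigr)$; the second factor is exactly the subalgebra $\mathcal{T}$ generated by $X_0,\dots,X_k$, it is isomorphic to $\kk[t_0,\dots,t_k]/(t_0^p,\dots,t_k^p)$, and $\kk[V]_G$ is free over $\mathcal{T}$ with any homogeneous $\kk$-basis of $R$ as basis --- giving~(3) and~(4).
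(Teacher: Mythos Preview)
Your proposal is honest about its own gaps, and those gaps are real. You correctly isolate the two hard points --- showing that every positive-degree invariant lies in the ideal generated by the norms together with $\mathcal{H}\cap A$, and showing that $X_i^p=0$ in $\kk[V]_G$ --- but you close neither one. The ``transfer bookkeeping'' you flag for~(1) is not routine: once you subtract off the top-$x$-degree part of $f$ (whose coefficients you have correctly argued lie in $A^G_+$), the remainder is no longer $G$-invariant, so you cannot iterate your argument, and reconstituting an invariant of strictly smaller $x$-degree plus something in $J$ is precisely the original problem. For~(3)--(4), your ``equivalently'' is too strong: you only need $x_i^p\in\mathcal{H}$, not that every lower coefficient $c_{i,t}$ lies individually in $\mathcal{H}\cap A$ (these coefficients are not even $G$-invariant). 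But even the weaker statement is not forced by your setup.

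The paper (in proving the more general Theorem~\ref{central}) closes both gaps with a single device: the locally finite iterative higher derivation $\Delta_i$ attached to the terminal variable $x_i$, arising from the $(\kk,+)$-action $x_i\mapsto x_i+t$. The key identity (Lemma~\ref{ident}) is
\[
\sum_{j=0}^{d}(-1)^j\, x_i^{\,j}\,\Delta_i^j(f)=f_0,
\]
where $f_0$ is the $x_i$-constant term of $f$. Since $\Delta_i^j$ commutes with $G$ and preserves $\mathcal{H}$ for all $j<\deg(x_i)=p$ (Lemma~\ref{hilbert}), this gives $f_0\in\mathcal{H}$ for any $f\in\mathcal{H}$ of $x_i$-degree $<p$; iterating over the terminal variables proves~(1) with no transfer computation at all (Theorem~\ref{main}). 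The same identity, applied to $\bar N:=N^G(x_i)-x_i^p$ (on which $\Delta_i^j$ agrees with $\Delta_i^j(N^G(x_i))$ for $0<j<p$ because $\Delta_i^j(x_i^p)=0$ there), yields $x_i^p+f_0\in\mathcal{H}$ (Theorem~\ref{constant}); and since one factor of the orbit product $N^G(x_i)$ is $x_i$ itself, its $x_i$-constant term $f_0$ vanishes, whence $x_i^p\in\mathcal{H}$. Your final paragraph then goes through. So the missing idea is exactly this $\Delta$-identity, which replaces your top-down coefficient peeling by a one-step extraction of the constant term.
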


The goal of this article is to generalize the above, as far as
possible, to the case of all finite $p$-groups. In particular we
show in section two:

\begin{Theorem}
\label{central} Let $G$ be a finite $p$-group and $V$ a $\kk
G$-module that contains $k+1$ non-trivial summands. Choose a basis
$x_0,x_1,\ldots ,x_n$ in which the variables $x_0,x_1,\ldots ,x_k$
coming from each summand are terminal variables. Let $d_i$ denote
$\deg(x_i)$ for $0\le i\le k$. Retain the notation in the
proposition above, then:
\begin{enumerate}
\item There is  a choice for polynomials
$N(x_0),N(x_1),\ldots,N(x_k)$ such that the Hilbert Ideal of
$\kk[V]^G$ is generated by $N(x_0),N(x_1),\ldots,N(x_k)$, and
polynomials in $A$. \item $\kk[V]_G$ has dimension divisible by
$\prod_{i=0}^k d_i$.\
\end{enumerate}
Suppose in addition that,   one has $d_i=\deg(N^G(x_i))$ for $0\le
i\le k$. Then we have:

\begin{enumerate}
\item[(3)] $\kk[V]_G$ is free as a module over its subalgebra
$\mathcal{T}$ generated by $X_0,X_1,\ldots,X_k$. \item[(4)]
$\mathcal{T} \cong
\kk[t_0,\ldots,t_k]/(t_0^{d_0},\ldots,t_k^{d_k})$, where $t_0,
\dots ,t_k$ are independent variables.
\end{enumerate}
\end{Theorem}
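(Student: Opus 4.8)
The plan is to prove (1) first, since it is the essential assertion, and then to read off (2)--(4) from it.

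\emph{Statement (1).} I would argue by induction on $\dim V$. Order the summands so that $x_0$ is the terminal variable coming from a fixed non-trivial summand $W_0$, and write $V_1^*=\langle x_1,\dots,x_n\rangle$, which is a $\kk G$-submodule of $V^*$ by terminality, with $\kk[V_1]=\kk[x_1,\dots,x_n]$; the variables $x_1,\dots,x_k$ are still terminal in $V_1^*$ and the subalgebra $A$ is the same for $V_1$ as for $V$. Since $W_0$ is non-trivial, $\sigma x_0-x_0$ lies, for every $\sigma\in G$, in the submodule of $W_0^*$ spanned by the non-terminal variables of $W_0$, in particular in $A$. The heart of the matter is a reduction lemma: for a suitable choice of $N(x_0)$ --- monic of degree $d_0$ in $x_0$, with coefficients in $A$ --- one shows that modulo $(N(x_0))$ every $f\in\kvgplus$ is congruent to a polynomial in $x_0$ of degree $<d_0$ whose coefficients, modulo $(\mathcal{H}\cap A)\kv$, are invariants of $\kk[V_1]$; applying the induction hypothesis to $V_1$ to those invariants then rewrites $f$ in terms of $N(x_0),\dots,N(x_k)$ and polynomials in $A$. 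This is where the locally finite derivations of the previous section enter: one controls the reduction with the derivation $D=\log\sigma$ attached to a suitably chosen central element $\sigma$ of order $p$ (so that $\ker D=\kv^{\langle\sigma\rangle}$ and $D$ lowers $x_0$-degree by one), together with the $1$-cocycle $\sigma\mapsto\sigma x_0-x_0$, and the same circle of ideas gives that each $d_i$ is a $p$-power. I expect the reduction lemma to be the main obstacle: choosing the $N(x_i)$ correctly (perhaps replacing the naive minimal-degree monic invariants by better representatives of the same degree) and proving that the successive reductions against $N(x_0),\dots,N(x_k)$ terminate inside $(\mathcal{H}\cap A)\kv$ rather than leaving spurious pure powers of the $x_i$ behind.

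\emph{Statement (2).} Granting (1), put $\overline{A}:=A/(\mathcal{H}\cap A)$; this is a quotient of the finite-dimensional coinvariant algebra $A/A^G_+A$, hence finite dimensional. Viewing $\kv=A[x_0,\dots,x_k]$ as a polynomial ring over $A$, statement (1) says
\[
\kv_G\;\cong\;\overline{A}[x_0,\dots,x_k]\big/\big(\overline{N(x_0)},\dots,\overline{N(x_k)}\big),
\]
and each $\overline{N(x_i)}$ is monic of degree $d_i$ in $x_i$. Reducing successively modulo $\overline{N(x_0)},\overline{N(x_1)},\dots$ shows that $\kv_G$ is a free $\overline{A}$-module on the monomials $X_0^{a_0}\cdots X_k^{a_k}$ with $0\le a_i<d_i$; hence $\dim_\kk\kv_G=(\dim_\kk\overline{A})\prod_{i=0}^k d_i$, which is divisible by $\prod_{i=0}^k d_i$.

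\emph{Statements (3) and (4).} Now suppose $d_i=\deg N^G(x_i)=[G:G_{x_i}]$, which is automatically a power of $p$. The monic polynomial $P_i(T)=\prod_{z\in G\cdot x_i}(T-z)$ lies in $\kvg[T]$, all of its non-leading coefficients are homogeneous invariants of positive degree and hence lie in $\mathcal{H}$, and $P_i(x_i)=0$; therefore $x_i^{d_i}\in\mathcal{H}$, i.e. $X_i^{d_i}=0$ in $\kv_G$, so there is a surjection $\pi\colon\kk[t_0,\dots,t_k]/(t_0^{d_0},\dots,t_k^{d_k})\twoheadrightarrow\mathcal{T}$ with $\pi(t_i)=X_i$. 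I would further show $\overline{N^G(x_i)}=x_i^{d_i}$ in $\overline{A}[x_0,\dots,x_k]$: the coefficient of $x_i^{d_i-m}$ in $N^G(x_i)$ is the $m$-th elementary symmetric function $e_m$ of the differences $\sigma x_i-x_i$, which all lie in $A$; the Taylor-shift identity $\prod_\sigma\big(T-(\sigma x_i-x_i)\big)=P_i(T+x_i)$ shows $e_m$ is congruent modulo $\mathcal{H}$ to a multiple of $\binom{d_i}{m}x_i^m$, and since $d_i$ is a power of $p$ the coefficient $\binom{d_i}{m}$ vanishes in $\kk$ for $1\le m\le d_i-1$, so $e_m\in\mathcal{H}\cap A$ has zero image in $\overline{A}$ (and the constant term $e_{d_i}=\prod_\sigma(\sigma x_i-x_i)$ is already $0$ because one factor vanishes). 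Since $N^G(x_i)\in\mathcal{H}$, it follows that $x_i^{d_i}=\overline{N^G(x_i)}$ lies in the ideal $(\overline{N(x_0)},\dots,\overline{N(x_k)})$, so $\kv_G$ is a quotient of $\overline{A}[x_0,\dots,x_k]/(x_0^{d_0},\dots,x_k^{d_k})$; comparing $\kk$-dimensions with (2) forces
\[
\kv_G\;\cong\;\overline{A}\otimes_\kk\kk[t_0,\dots,t_k]/(t_0^{d_0},\dots,t_k^{d_k}),\qquad X_i\;\longleftrightarrow\;1\otimes t_i.
\]
Hence $\pi$ is an isomorphism, $\mathcal{T}\cong\kk[t_0,\dots,t_k]/(t_0^{d_0},\dots,t_k^{d_k})$, and $\kv_G$ is free over $\mathcal{T}$, on any $\kk$-basis of $\overline{A}$.
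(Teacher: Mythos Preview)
Your treatment of parts (2)--(4), granting (1), is correct and in fact tidier than the paper's. The paper deduces (2) from a Gr\"obner--basis description of the lead term ideal of $\mathcal{H}$, and obtains (3)--(4) from a separate lemma after using a l.f.i.h.d.\ identity to show $x_i^{d_i}\in\mathcal{H}$. You instead read everything off the isomorphism $\kv_G\cong\overline{A}[x_0,\dots,x_k]/(\overline{N(x_0)},\dots,\overline{N(x_k)})$: freeness over $\overline{A}$ gives (2) immediately, and your Taylor-shift computation (combined with $\binom{d_i}{m}=0$ for $0<m<d_i$) shows that each non-leading $x_i$-coefficient of $N^G(x_i)$ lies in $\mathcal{H}\cap A$, whence $\overline{N^G(x_i)}=x_i^{d_i}$ and $\kv_G\cong\overline{A}\otimes_\kk\kk[t_0,\dots,t_k]/(t_i^{d_i})$. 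That is a genuine alternative route and arguably more transparent. (The preliminary argument via $P_i(x_i)=0$ is correct but redundant once you have the Taylor-shift computation.)

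The gap is in part (1). You acknowledge it yourself: the ``reduction lemma'' is the whole content, and you have not proved it. More specifically, your proposed tool is inadequate. For a central $\sigma$ of order $p$, the operator $\sigma-1$ (or any formal $\log\sigma$ built from it) is not a derivation of $\kv$ but only a $\sigma$-twisted derivation, and it does not isolate the variable $x_0$: it moves every $x_j$ not fixed by $\sigma$, so it does not lower $x_0$-degree in a controlled way. Your induction also needs the coefficients of the remainder $r=\sum_{j<d_0} r_j x_0^j$ to be invariants (at least modulo $\mathcal{H}\cap A$), and that fails in general: from $\sigma r=r$ one only gets $\sigma(r_l)=r_l-\sum_{j>l}\binom{j}{l}(\sigma x_0-x_0)^{j-l}\sigma(r_j)$, so the lower coefficients are not invariant.

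The paper's device for (1) is different and is exactly what closes this gap. For each terminal $x_i$ one lets the additive group $(\kk,+)$ act by $t\cdot x_i=x_i+t$, fixing the other variables; terminality makes this commute with $G$, so the associated locally finite iterative higher derivation $\{\Delta_i^j\}_j$ satisfies $\Delta_i^j(\kvg)\subseteq\kvg$ and, crucially, $\Delta_i^j(\mathcal{H})\subseteq\mathcal{H}$ for $j<d_i$. Combined with the ``evaluation at $x_i=0$'' identity $\sum_{j}(-x_i)^j\Delta_i^j(f)=f|_{x_i=0}$, one gets that any invariant of $x_i$-degree $<d_i$ has its $x_i$-constant term already in $\mathcal{H}$; iterating over $i=0,\dots,k$ pushes everything into $\mathcal{H}\cap A$. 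If you want to keep your overall architecture, replace $\log\sigma$ by these $\Delta_i^j$; then no induction on $\dim V$ is needed.
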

In section three  we  describe a situation for a $p$-group, where
the complete intersection property of the Hilbert Ideal
corresponding to a module is inherited from the Hilbert Ideal of
the indecomposable summands of the module. The final section is
devoted to applications of our main results to cyclic $p$-groups
and the Klein 4-group. In turns out that for a  cyclic $p$-group
the bottom variables $x_i$ of Jordan blocks satisfy
$\deg(x_i)=\deg(N^G(x_i))$.
 Consequently, (3) and (4) above hold for a cyclic $p$-group.
  Additionally for the Klein 4-group we show that the Hilbert Ideal corresponding to a module is a complete intersection as long as the module  does not contain the regular module as a summand.
 This generalizes a result of the second author and Shank \cite{SezerShank}, where the complete intersection property was established  for indecomposable modules only.

This article was composed during a visit of the second author to the University of Aberdeen, funded by the Edinburgh Mathematical Society's Research Support Fund. We would like to thank the society for their support.

\section{Main Results}
\label{two}

Throughout this section, we let $G$ be a finite $p$-group, $\kk$ a
field of characteristic $p$ and $V$ a $\kk G$-module, which may be
decomposable. As trivial summands do not contribute to the
coinvariants, we assume no direct summand of $V$ is trivial. Let
$x_0,x_1,x_2,\ldots, x_n$ be a basis of $V^*$ and assume that
$x_0$ is  a terminal variable. Then    $x_1, x_2,\ldots, x_n$
generate a $G$-subalgebra which we denote by $A$. We can define a
non-linear action of $(\kk,+)$ on $\kk[V]$ as follows:

\begin{eqnarray}
t \cdot x_0 =& x_0+t  ;& \\
t \cdot x_i =& x_i & \text{for any $i>0$.}
\end{eqnarray}

The terminality of $x_0$ ensures this commutes with the action of $G$. It is well-known that any action of the additive group of an infinite field of prime characteristic is determined by a \emph{locally finite iterative higher derivation}. This is a family of $\kk$-linear maps $\Delta^i: \kk[V] \rightarrow \kk[V]$, $i \geq 0$ satisfying the following properties:

\begin{enumerate}
\item $\Delta^0= \id_{\kk[V]}$. \item For all $i>0$ and $a, b \in
\kk[V]$ one has $\Delta^i(ab) =
\sum_{j+k=i}\Delta^j(a)\Delta^k(b)$. \item For all $b \in \kk[V]$
there exists $i \geq 0$ such that $\Delta^i(b)=0$. \item For all
$i,j$ one has $\Delta^j \circ \Delta^i = \begin{pmatrix} i+j \\ j
\end{pmatrix}\Delta^{i+j}$.
\end{enumerate}

The equivalence of the group action and the l.f.i.h.d. is given by the formula
\begin{equation}\label{deltadef}
t \cdot b = \sum_{i \geq 0} t^i \Delta^i(b).
\end{equation}
See \cite{TanimotoLfihd, DufresneMaurischat} for more details on l.f.i.h.d.'s.

Let $f \in \kk[V]^G$ be homogeneous of degree $d$ in $x_0$. We
write
\[f= f_dx_0^d+f_{d-1}x_0^{d-1}+ \ldots+f_0,\] where $f_i \in A$.  We have
\[t \cdot f =f_d(x_0+t)^d+f_{d-1}(x_0+t)^{d-1}+ \ldots+f_0 =  \sum_{i \geq 0} t^i \Delta^i(f).\]
That is to say that $\Delta^i(f)$ is the coefficient of $t^i$ in the
above expression. As the action of $G$ commutes with the action of
 $\kk$, we see  that $\Delta^i(f) \in \kk[V]^G$ for all $i
\geq 0$.

\begin{rem}\label{rem}
\begin{enumerate}
\item Clearly $\Delta^1 = \frac{\partial}{\partial x_0}$. So the
previous paragraph  generalizes \cite[Lemma~1]{SezerCoinv}. \item
$\Delta^i$ is determined by the maps $\Delta^{p^j}$ for all $j$
such that $p^j<i$. One way to think of $\Delta^{p^j}$ is as
``Differentiation by $x_0^{p^j}$'': if the coefficient of $p^j$ in
the base $p$ expansion of $m$ is $a$,  then we have
\[\Delta^{p^j}(x_0^m) = \left\{ \begin{array}{lr} ax_0^{m-p^j} & a>0; \\0 & a=0. \end{array} \right.\]
A consequence of this is that for a homogeneous $f \in \kk[V]$,
$\Delta^j(f)$ is a non-zero constant if and only if its lead term
is $x_0^j$. \item In \cite{ElmerKohlsPPowers}  a $G$-equivariant
map is constructed from polynomials whose $x_0$-degree is at most
$ep^r$ $(0<e<p)$ to polynomials whose $x_0$-degree is at most
$p^r$. This map turns out to be a nonzero scalar multiple of
$\Delta^{(e-1)p^r}$.
\end{enumerate}
\end{rem}

We have the following statement generalizing
\cite[Lemma~2]{SezerCoinv}:
\begin{Lemma}\label{ident}
Let $f \in \kk[V]$ be a homogeneous polynomial of degree $d$ in
$x_0$. Write $f= f_dx_0^d+f_{d-1}x_0^{d-1}+ \ldots+f_0$, where
$f_i \in A$. Then we have
\[\sum_{i =0}^d (-1)^i x_0^i \Delta^i(f) = f_0.\]
\end{Lemma}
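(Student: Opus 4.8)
The plan is to specialize the defining identity \eqref{deltadef} of the l.f.i.h.d.\ at $t=-x_0$. Fix $f\in\kk[V]$ homogeneous of degree $d$ in $x_0$ and write $f=\sum_{j=0}^{d}f_jx_0^j$ with $f_j\in A$. Since each $f_j$ is free of $x_0$, applying the additive-group action gives
\[
t\cdot f \;=\; \sum_{j=0}^{d} f_j\,(x_0+t)^j,
\]
which, expanded by the binomial theorem, is a polynomial in $t$ of degree at most $d$ with coefficients in $\kk[V]$. Comparing with \eqref{deltadef}, this already shows $\Delta^i(f)=0$ for $i>d$ (cf.\ Remark~\ref{rem}(2)) and exhibits both sides of
\[
\sum_{j=0}^{d} f_j\,(x_0+t)^j \;=\; \sum_{i=0}^{d} t^i\,\Delta^i(f)
\]
as elements of the polynomial ring $\kk[V][t]$.

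Next I would apply the $\kk[V]$-algebra homomorphism $\kk[V][t]\to\kk[V]$ sending $t\mapsto -x_0$ to both sides of this identity. On the left-hand side every term with $j\ge 1$ acquires the factor $(x_0-x_0)^j=0$, so only the $j=0$ term survives and the left-hand side becomes $f_0$. On the right-hand side the substitution yields $\sum_{i=0}^{d}(-x_0)^i\Delta^i(f)=\sum_{i=0}^{d}(-1)^i x_0^i\Delta^i(f)$. Equating the two sides gives exactly the claimed formula.

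There is essentially no serious obstacle: the only point requiring care is the passage from the scalarwise identity \eqref{deltadef} to an identity of polynomials in $\kk[V][t]$, which is what licenses substituting the non-scalar value $t=-x_0$; this is immediate once one observes that the sum in \eqref{deltadef} terminates at $i=d$, so both sides are genuinely polynomials in $t$. If one prefers to avoid the substitution altogether, one can argue purely formally: read off $\Delta^i(f)=\sum_{j=i}^{d}\binom{j}{i}f_jx_0^{j-i}$ directly from the binomial expansion, substitute into $\sum_{i=0}^{d}(-1)^i x_0^i\Delta^i(f)$, interchange the order of summation, and use $\sum_{i=0}^{j}(-1)^i\binom{j}{i}=0$ for $j\ge 1$ (and $=1$ for $j=0$), valid over $\ZZ$ and hence in characteristic $p$, to collapse the sum to $f_0$. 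This version makes the proof independent of any hypothesis on $\kk$.
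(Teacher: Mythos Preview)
Your proof is correct and follows essentially the same route as the paper: substitute $t=-x_0$ into the defining relation \eqref{deltadef} so that the left side collapses to $f_0$. If anything, you are more careful than the paper in justifying the substitution by first promoting \eqref{deltadef} to a polynomial identity in $\kk[V][t]$; the paper simply asserts that since the identity holds for all scalars $t$ it continues to hold when $t$ is replaced by $-x_0$.
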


\begin{proof} Write $f = f(x_0,x_1,x_2,\ldots, x_n)$. For any $t \in \kk$ we have
\[t \cdot f = f(t \cdot x_0, t \cdot x_1,\ldots , t \cdot x_n) = f(x_0+t,x_1,x_2,\ldots,x_n). \]
As this holds for all $t$ it also holds when $t$ is replaced by $(-x_0)$, and hence by (\ref{deltadef})
$\sum_{i =0}^d (-1)^i x_0^i \Delta^i(f) = (-x_0)\cdot f = f(0,x_1,x_2,\ldots,x_n) = f_0$ as required.
\end{proof}

We also note that the degree of a terminal variable is a
$p$-power.
\begin{Lemma}
\label{power} For any terminal variable $x_0 \in V^*$, $\deg
(x_0)$   is a power of $p$.
\end{Lemma}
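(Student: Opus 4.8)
The plan is to argue that if $x_0$ is a terminal variable with $N(x_0)$ of degree $d$ in $x_0$, then writing $d = e p^r$ with $p \nmid e$ forces $e = 1$. Write $N(x_0) = x_0^d + f_{d-1}x_0^{d-1} + \dots + f_0$ with $f_i \in A$; since $x_0$ was chosen so that $N(x_0)$ is monic of minimal positive $x_0$-degree among invariants, $d$ is minimal with this property. The key tool is the family $\Delta^i$ acting on $\kk[V]^G$: since the non-linear $\Ga$-action fixing $A$ commutes with $G$, each $\Delta^i(N(x_0))$ again lies in $\kk[V]^G$.

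First I would examine $\Delta^{(e-1)p^r}(N(x_0))$ (equivalently the $G$-equivariant map of Remark \ref{rem}(3), or directly compute via Remark \ref{rem}(2): $\Delta^{p^j}$ acts as differentiation with respect to $x_0^{p^j}$, reading off base-$p$ digits). Applying $\Delta^{(e-1)p^r}$ to the lead term $x_0^{ep^r}$: the base-$p$ expansion of $ep^r$ has digit $e$ in position $p^r$ and the digits of $(e-1)p^r$ in position $p^r$ is $e-1$ (assuming $e-1 < p$, which holds since $p\nmid e$ and we may take $0<e<p$ after factoring out the full power of $p$). A short computation with property (4) and Remark \ref{rem}(2) shows $\Delta^{(e-1)p^r}(x_0^{ep^r})$ has lead term a nonzero scalar times $x_0^{p^r}$; and $\Delta^{(e-1)p^r}$ lowers $x_0$-degree by $(e-1)p^r$, so $\Delta^{(e-1)p^r}(N(x_0))$ is an invariant whose $x_0$-degree is exactly $p^r$ (the lead coefficient being a nonzero constant by Remark \ref{rem}(2)). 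If $e > 1$ then $p^r < d$, contradicting minimality of $d$. Hence $e = 1$ and $d = p^r$ is a $p$-power.

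The main obstacle is verifying that the leading coefficient does not vanish — i.e. that $\Delta^{(e-1)p^r}(N(x_0))$ is genuinely a \emph{monic} (up to scalar) polynomial of $x_0$-degree exactly $p^r$ rather than something of smaller $x_0$-degree. This is handled by Remark \ref{rem}(2): for a homogeneous $f$, $\Delta^j(f)$ has lead term a scalar multiple of $x_0^{j'}$ precisely when the relevant base-$p$ digit relation holds, and here the digit of $ep^r$ at $p^r$ is $e$ while that of $(e-1)p^r$ is $e-1$, their difference being $1 \neq 0$ in $\kk$. One must also check that lower-degree terms $f_i x_0^i$ in $N(x_0)$ cannot contribute anything of $x_0$-degree $\geq p^r$ after applying $\Delta^{(e-1)p^r}$, which is immediate since that operator strictly lowers $x_0$-degree by $(e-1)p^r > 0$ and $i \le d-1$ gives $x_0$-degree at most $d - 1 - (e-1)p^r < p^r$ when $e\ge 2$. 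With these digit bookkeeping points in place the argument closes.
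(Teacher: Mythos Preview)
Your overall strategy is sound and close in spirit to the paper's, but there is a genuine slip in the justification. Writing $d = e p^r$ with $p \nmid e$ does \emph{not} force $0 < e < p$; for instance $d = p^2 + 1$ gives $r = 0$ and $e = p^2+1$. Consequently your digit description ``$e p^r$ has digit $e$ in position $p^r$'' and the subsequent bookkeeping are unjustified as written. The repair is immediate once you drop that claim: one has $\Delta^{(e-1)p^r}(x_0^{\,e p^r}) = \binom{e p^r}{(e-1)p^r}\, x_0^{p^r}$, and by Lucas' theorem $\binom{e p^r}{(e-1)p^r} \equiv \binom{e}{e-1} = e \not\equiv 0 \pmod p$ since $p\nmid e$. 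With this in place your check on the lower terms (that $\Delta^{(e-1)p^r}$ sends $f_i x_0^i$ to something of $x_0$-degree at most $p^r-1$) is fine, and the argument closes.

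For comparison, the paper argues digit by digit rather than in one shot: it writes $d = d_r p^r + \cdots + d_0$ in base $p$ and notes that if some lower digit $d_j$ with $j<r$ is nonzero then $\Delta^{p^j}(N(x_0))$ is invariant and monic in $x_0$ of degree $d - p^j$, contradicting minimality; and if all lower digits vanish but $d_r>1$ then $\Delta^{p^r}$ plays the same role. Thus the paper only ever uses the single ``differentiation by $x_0^{p^j}$'' maps from Remark~\ref{rem}(2), avoiding any Lucas-type computation, whereas your one-step application of $\Delta^{(e-1)p^r}$ is slightly slicker but needs that binomial coefficient to be checked nonzero modulo $p$.
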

\begin{proof}
Let $d$ denote the degree of $x_0$ and suppose $f \in \kk[V]^G$ is
monic as a polynomial in $x_0$ of degree
$d=d_rp^r+d_{r-1}p^{r-1}+\cdots +d_0$ with $0\le d_i<p$ and
$d_r\neq 0$. If $d_j\neq 0$ for some $j<r$, then $\Delta^{p^j}(f)
\in \kk[V]^G$
  has degree $d-p^j>0$  as  a polynomial in $x_0$ and its leading coefficient is in $\kk$. Similarly,  if $d_j= 0$ for $j<r$ and $d_r>1$, then
$\Delta^{p^r}(f) \in \kk[V]^G$ has degree $d-p^r>0$ in $x_0$ and
its leading coefficient is in $\kk$. Both cases violate the
minimality of $d$.
\end{proof}

\begin{Lemma}
\label{hilbert} Let $d$ denote the degree of $x_0$. Then $\Delta^j
(\mathcal{H})\subseteq \mathcal{H}$ for $j<d$.
\end{Lemma}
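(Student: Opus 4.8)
We must show that for $j < d$, where $d = \deg(x_0)$, the operator $\Delta^j$ maps the Hilbert Ideal $\mathcal{H} = \kk[V]^G_+\kk[V]$ into itself. Since $\Delta^j$ is $\kk$-linear, and $\mathcal{H}$ is spanned as a $\kk$-vector space by elements of the form $h g$ with $h \in \kk[V]^G_+$ and $g \in \kk[V]$, it suffices to show $\Delta^j(hg) \in \mathcal{H}$ for all such products. By the Leibniz-type rule (property (2) of the l.f.i.h.d.), $\Delta^j(hg) = \sum_{a+b=j} \Delta^a(h)\Delta^b(g)$. Each term $\Delta^b(g)$ lies in $\kk[V]$, so it is enough to check that $\Delta^a(h) \in \mathcal{H}$ for every homogeneous invariant $h \in \kk[V]^G_+$ and every $a \le j < d$.

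Thus the crux is: for a homogeneous invariant $h$ of positive degree, $\Delta^a(h) \in \mathcal{H}$ whenever $a < d$. We already observed in the text preceding Lemma \ref{ident} that $\Delta^a(h) \in \kk[V]^G$, so if $\Delta^a(h)$ has positive degree we are done immediately — it is itself a positive-degree invariant. The only way this can fail is if $\Delta^a(h)$ is a nonzero constant. By Remark \ref{rem}(2), $\Delta^a(h)$ is a nonzero constant if and only if the lead term of $h$ (as a polynomial in $x_0$, with coefficients in $A$) is exactly $x_0^a$; equivalently, $h$ is a monic polynomial in $x_0$ of degree $a$. But then $h \in \kk[V]^G$ is monic in $x_0$ of positive degree $a < d$, contradicting the minimality in the definition of $d = \deg(x_0)$. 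Hence $\Delta^a(h)$ is never a nonzero constant for $a < d$, so it is either $0$ or a positive-degree invariant; in all cases $\Delta^a(h) \in \mathcal{H}$.

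Assembling these observations gives the result: for $h \in \kk[V]^G_+$ homogeneous and $g \in \kk[V]$, every term $\Delta^a(h)\Delta^b(g)$ with $a+b = j < d$ has $\Delta^a(h) \in \mathcal{H}$ (taking $a \le j < d$), hence $\Delta^j(hg) \in \mathcal{H}$, and by linearity $\Delta^j(\mathcal{H}) \subseteq \mathcal{H}$. The one point requiring a little care — and the only place the hypothesis $j < d$ is used — is the step ruling out $\Delta^a(h)$ being a nonzero constant; I expect this to be the main (though minor) obstacle, and it is handled cleanly by combining Remark \ref{rem}(2) with the minimality in the definition of $\deg(x_0)$.
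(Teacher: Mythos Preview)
Your proof is correct and follows essentially the same approach as the paper's: first show $\Delta^a(\kk[V]^G_+) \subseteq \mathcal{H}$ for $a < d$ by combining Remark~\ref{rem}(2) with the minimality of $d$, and then extend to all of $\mathcal{H}$ via the Leibniz rule (property~(2) of the l.f.i.h.d.). The paper compresses these steps into three sentences, but the logic is identical.
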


\begin{proof}Let $f\in \kk[V]$.  From the second assertion of Remark \ref{rem} we get that $\Delta^j (f)$ contains a non-zero constant if and only if
the monomial $x_0^j$ appears in $f$.  Therefore, by the minimality
of $d$ we have $\Delta^j (\kk[V]^G_+)\subseteq \mathcal{H}$ for
$j<d$.  Now the result follows from property (2) of l.f.i.h.d.'s.
\end{proof}

From this point  on, we adopt the notation of the introduction.
This means that $x_0,x_1,x_2,\ldots ,x_k$ are terminal variables
coming from different summands,
 and $A = \kk[x_{k+1},x_{k+2},\ldots, x_n]$. For each $i=0,\ldots,k$ let $d_i = p^{r_i}$  be the degree of $x_i$. Since setting variables outside of a summand to zero sends invariants to invariants of the summand, we may also assume that
 $N(x_i)$ depends only on variables that come from the summand that contains $x_i$. We denote by $\Delta_i$ the l.f.i.h.d. associated to
 $x_i$. We use reverse lexicographic order with $x_i>x_j$ whenever
 $0\le i\le k$ and $k+1\le j\le n$.

\begin{Theorem}\label{main} $\mathcal{H}$ is generated by $N(x_0),\ldots,N(x_k)$ and polynomials in $A$. Moreover, the lead term ideal of $\mathcal{H}$ is generated
 by $x_0^{p^{r_0}},x_1^{p^{r_1}},\ldots, x_k^{p^{r_k}}$ and monomials in $A$.
\end{Theorem}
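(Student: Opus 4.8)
The plan is to prove both assertions simultaneously by an induction that peels off the terminal variables $x_0, x_1, \dots, x_k$ one at a time, using the l.f.i.h.d.\ $\Delta_0$ (and its counterparts $\Delta_i$) to reduce the $x_i$-degree of an arbitrary invariant modulo the ideal generated by $N(x_i)$ and ``lower'' data. First I would focus on a single terminal variable, say $x_0$ with $d = d_0 = p^{r_0}$, and show that every homogeneous $f \in \kk[V]^G$ lies in the ideal $J$ generated by $N(x_0)$ together with invariants whose $x_0$-degree is strictly smaller than $\deg_{x_0}(f)$ --- or, more precisely, that working modulo $N(x_0)$ one can always reduce to $x_0$-degree $< d$. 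The key input here is Lemma~\ref{hilbert}: since $\Delta^j(\mathcal{H}) \subseteq \mathcal{H}$ for $j < d$, and by Remark~\ref{rem}(2) the operator $\Delta^{p^j}$ acts as a nonzero scalar times ``differentiation by $x_0^{p^j}$'' on the leading term, one can use the $\Delta^{p^j}$ for $p^j < d$ to carve away any invariant whose leading $x_0$-exponent is not a multiple of $d$, exactly as in the proof of Lemma~\ref{power}. This shows the leading $x_0$-exponent of a ``new'' generator must be $\equiv 0 \pmod{d}$, and then subtracting a suitable multiple of a power of $N(x_0)$ (whose lead term in $x_0$ is $x_0^{d}$, up to a unit in $A$) strictly decreases the $x_0$-degree. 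Hence modulo $(N(x_0))$ every invariant is congruent to one of $x_0$-degree $< d$; iterating Lemma~\ref{hilbert} once more handles those, so $\mathcal{H} = (N(x_0)) + \mathcal{H}'$ where $\mathcal{H}'$ is generated by invariants of $x_0$-degree zero, i.e.\ invariants lying in $\kk[x_1, \dots, x_n]$.

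Next I would iterate this over $i = 1, \dots, k$. After removing $x_0$ we are left with invariants in $\kk[x_1, \dots, x_n]$; but an invariant of the full module $V$ that happens to involve no $x_0$ need not be an invariant of the submodule with coordinate ring $\kk[x_1,\dots,x_n]$ in an obvious way --- here the reduction at the start of the section that $N(x_i)$ may be taken to depend only on the variables from the summand containing $x_i$, together with terminality of each $x_i$ (so that the relevant $\Delta_i$ still commutes with $G$ on the smaller ring), lets the same argument run. After $k+1$ steps all of $x_0, \dots, x_k$ have been eliminated and what remains is generated by polynomials in $A = \kk[x_{k+1}, \dots, x_n]$. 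This gives the first sentence: $\mathcal{H}$ is generated by $N(x_0), \dots, N(x_k)$ and polynomials in $A$.

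For the lead-term statement, note first the trivial inclusion: the lead terms of $N(x_0), \dots, N(x_k)$ are $x_0^{p^{r_0}}, \dots, x_k^{p^{r_k}}$ (up to units in $A$, which don't affect the reverse-lex lead term since $x_i > x_j$ for $i \le k < j$), and the lead terms of the generators in $A$ are monomials in $A$; so the ideal $L$ generated by $x_0^{p^{r_0}}, \dots, x_k^{p^{r_k}}$ and monomials in $A$ is contained in the lead term ideal $\mathrm{LT}(\mathcal{H})$. For the reverse inclusion I would argue that the listed generators of $\mathcal{H}$ form a Gr\"obner basis: take any $F \in \mathcal{H}$, write $F = \sum_i g_i N(x_i) + h$ with $h \in \mathcal{H} \cap (\text{ideal gen.\ by }A)$ from the first part, and show that $\mathrm{LT}(F)$ is divisible by some $x_i^{p^{r_i}}$ or by a lead monomial from $A$. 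If $\mathrm{LT}(F)$ involves some $x_i$ with $i \le k$ to a power $\ge p^{r_i}$ we are done; if it involves $x_i$ to a power $e$ with $0 < e < p^{r_i}$, apply $\Delta_i^{e'}$ with $e'$ chosen (via Remark~\ref{rem}(2), so that the base-$p$ digit picked out is nonzero) to strip that variable while staying in $\mathcal{H}$ by Lemma~\ref{hilbert}, and induct; eventually one is reduced to an element of $\mathcal{H} \cap A$ whose lead term is a monomial in $A$ lying in $\mathrm{LT}(\mathcal{H} \cap A) \subseteq L$.

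The main obstacle I anticipate is the bookkeeping in the iterated reduction across summands: making precise, at each stage, in which ring the ``current'' invariants live, why the operator $\Delta_i$ associated to $x_i$ still satisfies Lemma~\ref{hilbert} relative to the \emph{partially reduced} ideal, and that the $N(x_i)$ can be chosen compatibly (depending only on the summand of $x_i$) so the reductions for different $i$ do not interfere. A secondary subtlety is confirming that no lead-term cancellation occurs in reverse-lex order when the $N(x_i)$ have $A$-coefficients: the claim ``$\mathrm{LT}(N(x_i)) = x_i^{p^{r_i}}$'' needs the hypothesis $x_i > x_j$ for $j > k$ and the monic-in-$x_i$ normalization, but one should check the lower-degree terms of $N(x_i)$, which may involve other $x_j$ with $j \le k$, cannot produce a reverse-lex-larger monomial --- this follows because $N(x_i)$ depends only on variables from its own summand, so those other $x_j$ do not appear at all.
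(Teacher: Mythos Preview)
Your proposal has a genuine gap in the first part. You correctly reach the point where, modulo $(N(x_0))$, an invariant may be taken to have $x_0$-degree $<d$; but the claim that ``iterating Lemma~\ref{hilbert} once more handles those'' is not a proof. Lemma~\ref{hilbert} tells you that $\Delta^j(\mathcal{H})\subseteq\mathcal{H}$: it produces \emph{new} elements of $\mathcal{H}$, but does not by itself express the original $f$ as a $\kk[V]$-combination of elements of lower $x_0$-degree. The paper closes this gap with Lemma~\ref{ident}: the identity
\[
\sum_{j=0}^{d}(-1)^j x_0^{\,j}\,\Delta^j(f)=f_0
\]
rewrites $f$ as its $x_0$-constant term $f_0$ plus a $\kk[V]$-combination of the $\Delta^j(f)$ for $j\ge 1$. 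Each $\Delta^j(f)$ lies in $\mathcal{H}$ by Lemma~\ref{hilbert}, has strictly smaller $x_i$-degree, and (crucially) is again invariant, so one recurses on those. This telescoping identity is precisely the missing bridge between ``$\Delta^j(f)\in\mathcal{H}$'' and ``$f$ lies in the ideal generated by elements of $\mathcal{H}$ with $x_0$-degree zero.''

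Two secondary points. First, the paper's reduction to $x_0$-degree $<d$ is just polynomial division of the invariant $f$ by the monic invariant $N(x_0)$; uniqueness of quotient and remainder forces both to be invariant. Your $\Delta^{p^j}$ manoeuvres and the talk of leading $x_0$-exponents $\equiv 0\pmod d$ are unnecessary here and, as written, do not actually express $f$ in the desired ideal. Second, your worry in the iteration step --- whether the intermediate elements are invariants of some submodule --- dissolves once you see that Lemma~\ref{ident} is a formal polynomial identity valid for any $f$, and Lemma~\ref{hilbert} applies to all of $\mathcal{H}$; invariance is not needed to pass from $x_i$ to $x_{i+1}$.

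For the lead-term statement your $\Delta_i$-stripping idea can be made rigorous, but the paper takes a shorter route: having established the generating set $\{N(x_0),\dots,N(x_k)\}\cup(\mathcal{H}\cap A)$, it observes that the leading monomials $x_0^{p^{r_0}},\dots,x_k^{p^{r_k}}$ are pairwise coprime and coprime to every monomial in $A$, so by Buchberger's criterion the $S$-polynomials with any $N(x_i)$ reduce to zero, while $S$-polynomials of two elements of $A$ reduce within $A$. Hence the generating set is already a Gr\"obner basis.
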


\begin{proof} Let $f \in \kk[V]^G$.   Since $N(x_0)$ is monic in $x_0$ we may perform polynomial division and
write $f = qN(x_0)+r$ where $r$ has $x_0$-degree $<p^{r_0}$, and
it is easily shown that $q,r \in \kk[V]^G$. Then dividing $r$ by
$N(x_1)$ yields another invariant remainder  $r'$  that has
$x_1$-degree $<p^{r_1}$. Since $x_0$-degree of $N(x_1)$ is zero,
it follows that $x_0$-degree of $r'$ is still $<p^{r_0}$. Thus,
 by repeating the process with each terminal variable, and replacing $f$ with the final remainder we assume that $x_i$-degree of $f$ is $<p^{r_i}$ for $0\le i\le k$.

Let $i$ be minimal such that $f$ has nonzero degree $d<p^{r_i}$ in
the terminal variable $x_i$. We apply Lemma \ref{ident} with
$\Delta=\Delta_i$ to see that

\[f=f_0-(\sum_{j =1}^d (-1)^j x_i^j \Delta_i^j(f)),\]
where $f_0$ is the ``constant term'' of $f$, i.e., $f_0 \in
\kk[x_{i+1},\ldots, x_n]$.
 So from the previous  lemma we get that  $f_0 \in
\mathcal{H}$ since  $d<p^{r_i}$. Moreover, since $\Delta_i$
decreases $x_i$-degrees and does not increase degrees in any other
variable, the $x_i$-degree of each $\Delta_i^j(f)$ in the
expression above is strictly less than $d$, and the $x_l$-degree
for every $i<l \leq k$ remains strictly less than $p^{r_l}$. Thus,
by induction on degree, $f$ can be expressed as a
$\kk[V]$-combination of elements of $\mathcal{H}$ whose degrees in
the terminal variables  $x_0, \dots ,x_i$ are all zero and degrees
in the remaining terminal variables $x_l$ for $i<l \leq k$ are
strictly less than $p^{r_l}$, respectively. Repeating the same
argument with the remaining terminal variables gives us
 that $f$ can be written as a $\kk[V]$-combination
of elements of $\mathcal{H} \cap A$ together with $N(x_1), \dots ,
N(x_k) $   as required. The first assertion of the theorem
follows.

Note that the leading monomial of $N(x_i)$ is $x_i^{p^{r_i}}$ for
$0\le i\le k$. So it remains to show that all other monomials in
the lead term ideal of  $\mathcal{H}$ lie in $A$.
 Recall that by Buchberger's algorithm a Gr\"{o}bner basis is obtained by reduction
    of $S$-polynomials of a generating set by polynomial division, see \cite[\S 1.7]{MR1287608}. By the first part, $\mathcal{H}$ has a generating set consisting of
   $N(x_i)$ for $0\le i\le k$ and polynomials in $A$.  But the $S$-polynomial of two polynomials in
   $A$ is also in $A$ and via polynomials in $A$ it also reduces to a polynomial in $A$. Finally, the $S$-polynomial of $N(x_i)$ and a
  polynomial in $A$ and the $S$-polynomial of a pair  $N(x_i)$  and  $N(x_j)$ with $0\le i\neq j \le k$  reduce to zero since their
  leading monomials
  are pairwise relatively prime.
\end{proof}
\begin{Corollary}
The vector space dimension of $\kk[V]_G$ is divisible by
$\prod_{0\le i\le k}d_i=p^{\sum_{i=0}^k r_i}$.
\end{Corollary}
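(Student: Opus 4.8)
The plan is to deduce the corollary from Theorem~\ref{main} by exhibiting an explicit $\kk$-basis structure on $\kk[V]_G$ compatible with the subalgebra $\mathcal{T}$ generated by $X_0,\dots,X_k$. First I would observe that, by the ``Moreover'' part of Theorem~\ref{main}, the lead term ideal of $\mathcal{H}$ is $L = (x_0^{d_0},\dots,x_k^{d_k}) + M$, where $M$ is an ideal of $\kk[V]$ generated by monomials lying entirely in $A = \kk[x_{k+1},\dots,x_n]$. The standard monomials for $L$ — i.e. the monomials not in $L$ — therefore form a $\kk$-basis for $\kk[V]/\mathcal{H} = \kk[V]_G$, and each such monomial factors uniquely as $x_0^{a_0}\cdots x_k^{a_k} \cdot m$, where $0 \le a_i < d_i$ for each $i$ and $m$ is a standard monomial of $A/(M\cap A)$ (more precisely, a monomial in $x_{k+1},\dots,x_n$ not in $M$).

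Next I would note that this factorization sets up a bijection between the standard monomial basis of $\kk[V]_G$ and the Cartesian product $\{(a_0,\dots,a_k) : 0\le a_i<d_i\} \times \mathcal{B}$, where $\mathcal{B}$ is the set of standard monomials of $A$ modulo the monomial ideal $M\cap A$ generated by the $A$-part of our generating set. The key point is that the monomial condition on the $x_i$-part ($i\le k$) and the condition defining $\mathcal{B}$ are independent, precisely because $L$ is a monomial ideal whose generators separate into the pure powers $x_i^{d_i}$ and monomials purely in the remaining variables. Hence
\[
\dim_{\kk} \kk[V]_G = \left(\prod_{i=0}^k d_i\right) \cdot |\mathcal{B}|,
\]
and in particular $\prod_{i=0}^k d_i = p^{\sum_{i=0}^k r_i}$ divides $\dim_\kk \kk[V]_G$. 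To see that $\mathcal{B}$ is nonempty (so the dimension is genuinely a positive multiple), note that $1$ is a standard monomial since $\mathcal{H}$ is a proper ideal.

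The main thing to be careful about — though it is not really an obstacle — is justifying that the standard monomials of a monomial ideal form a basis of the quotient, and that no generator of $M$ can be the pure power of one of $x_0,\dots,x_k$; the latter follows because each generator of $M$ lies in $A$, which does not involve $x_0,\dots,x_k$ at all. Everything else is the elementary combinatorics of Cartesian products, so I expect the proof to be short. An alternative route, avoiding Gr\"obner bases, would be to invoke Theorem~\ref{main}(3)--(4) in the case $d_i = \deg(N^G(x_i))$, but since the corollary is stated without that hypothesis the lead-term-ideal argument is the correct level of generality.
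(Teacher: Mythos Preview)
Your argument is correct and is essentially the paper's own proof: both use Theorem~\ref{main} to identify the lead term ideal as $(x_0^{d_0},\dots,x_k^{d_k})$ together with monomials in $A$, then observe that the standard monomials factor as $x_0^{a_0}\cdots x_k^{a_k}\cdot m$ with $0\le a_i<d_i$ and $m\in A$, giving the divisibility. Your presentation is slightly more explicit (the Cartesian-product bijection, the remark that $1\in\mathcal{B}$), but the underlying idea is identical.
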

\begin{proof}
The set of monomials that are not in the lead term ideal of
$\mathcal{H}$ form a vector space basis for $\kk[V]_G$. Let
$\Lambda$ denote this set of monomials. By the previous theorem a
monomial $M\in A$ lies in  $\Lambda$ if and only if
$Mx_0^{a_0}\cdots x_k^{a_k}$ lies in  $\Lambda$ for $0\le a_i<
p^{r_i}$ and $0\le i\le k$. It follows that the size of the set
$\Lambda$ is divisible by $p^{\sum_{i=0}^k r_i}$.
\end{proof}
The following generalizes the content of
\cite[Theorem~5]{SezerCoinv} partially for a $p$-group.

\begin{Theorem}
\label{constant} Let $x_i$ be a terminal variable of degree $d$,
and write $N(x_i) = x_i^d+\sum_{j=0}^{d-1} f_jx_i^j $, where
$x_i$-degree of $f_j$ is zero for $0\le j\le d-1$. Then $x_i^d+f_0
\in \mathcal{H}$.
\end{Theorem}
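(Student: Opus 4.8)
The plan is to apply the Taylor-expansion identity of Lemma~\ref{ident} to $N(x_i)$ and then to absorb all of the intermediate terms into $\mathcal{H}$ using Lemma~\ref{hilbert}. By Lemma~\ref{power} we may write $d = p^{r}$, and throughout we work with the l.f.i.h.d.\ $\Delta_i$ attached to the terminal variable $x_i$. Observe first that $N(x_i) \in \kk[V]^G_+ \subseteq \mathcal{H}$, so Lemma~\ref{hilbert} (applied to $x_i$ in place of $x_0$) gives $\Delta_i^{j}\bigl(N(x_i)\bigr) \in \mathcal{H}$ for every $j$ with $1 \le j \le d-1$.

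Next I would apply Lemma~\ref{ident} with $f = N(x_i)$ and $\Delta = \Delta_i$; its proof uses only that $N(x_i)$ has $x_i$-degree $d$, not that it is homogeneous. This expresses $f_0$ as $\sum_{j=0}^{d} (-1)^{j} x_i^{j}\, \Delta_i^{j}\bigl(N(x_i)\bigr)$. I then separate off the $j = 0$ term, which is $N(x_i)$, and the $j = d$ term, which is $(-1)^{d} x_i^{d}\, \Delta_i^{d}\bigl(N(x_i)\bigr)$. The one computation requiring care is the evaluation $\Delta_i^{d}\bigl(N(x_i)\bigr) = 1$: since $d = p^{r}$, Remark~\ref{rem}(2) gives $\Delta_i^{p^{r}}(x_i^{p^{r}}) = 1$ and $\Delta_i^{p^{r}}(x_i^{j}) = 0$ for $0 \le j < p^{r}$; and since each $f_j$ involves no $x_i$ we have $\Delta_i^{a}(f_j) = 0$ for $a \ge 1$, so the product rule (property (2) of an l.f.i.h.d.) yields $\Delta_i^{p^{r}}(f_j x_i^{j}) = f_j\,\Delta_i^{p^{r}}(x_i^{j}) = 0$ for $j < p^{r}$; adding these up gives $\Delta_i^{d}\bigl(N(x_i)\bigr) = \Delta_i^{p^{r}}(x_i^{p^{r}}) = 1$.

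Combining everything, the $j = 0$ term lies in $\mathcal{H}$, the terms with $1 \le j \le d-1$ lie in $\mathcal{H}$ by the first paragraph, and the $j = d$ term is now $(-1)^{d} x_i^{d}$; hence $f_0 - (-1)^{d} x_i^{d} \in \mathcal{H}$. Finally one uses that $(-1)^{d} = -1$ in $\kk$ — for odd $p$ because $d = p^{r}$ is odd, and for $p = 2$ because $-1 = 1$ — to conclude $x_i^{d} + f_0 \in \mathcal{H}$, as claimed. I do not expect any genuine obstacle: the result is a short consequence of Lemmas~\ref{ident} and \ref{hilbert}, with the only points of substance being the evaluation $\Delta_i^{d}(N(x_i)) = 1$ and the bookkeeping of the sign $(-1)^{p^{r}} = -1$ in characteristic $p$.
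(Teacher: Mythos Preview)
Your argument is correct and follows essentially the same route as the paper's, using Lemma~\ref{ident} together with Lemma~\ref{hilbert}. The only difference is organizational: the paper applies Lemma~\ref{ident} to $\bar N := N(x_i) - x_i^{d}$ rather than to $N(x_i)$ itself, which sidesteps your computation $\Delta_i^{d}(N(x_i)) = 1$ and the sign $(-1)^{p^{r}} = -1$, at the cost of a small reassembly step $x_i^{d} + f_0 = N(x_i) + (f_0 - \bar N)$ at the end.
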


\begin{proof} Consider $\bar{N} = N(x_i)-x_i^{d}$. This is a polynomial of degree $e<d$ in $x_i$. By Lemma \ref{ident},
\[ \sum_{j =0}^e (-1)^j x_i^j \Delta_i^j(\bar{N}) = f_0 \]
since $f_0$ is the constant term of $\bar{N}$. Now recall that
$\Delta_i^j(x_i^{d})$ is the coefficient of $t^j$ in $(x_i+t)^{d}
= x_i^{d}+t^{d}$ (note
 that $d$ is a $p$-power by
Lemma \ref{power}). Thus, $\Delta^j(x_i^{d}) = 0$ for all $0<j<d$.
As $\Delta_i^j$ is a linear map for all $j$ it follows that
$\Delta_i^j(N(x_i)) = \Delta_i^j(\bar{N})$ for all $0<j<d$.
Therefore
\[\sum_{j =1}^e (-1)^j x_i^j \Delta^j({N(x_i)}) =f_0 -\bar{N}.\]
As $\Delta_i^j(N(x_i)) \in \mathcal{H}$ for all $j<d$ by Lemma
\ref{hilbert}, we get that $f_0-\bar{N} \in \mathcal{H}$.
Therefore $x_i^d+f_0 = N(x_i)-\bar{N}+f_0 \in \mathcal{H}$ as
required.
\end{proof}


\begin{Lemma}\label{freeness}
Suppose that for each $i=0,\ldots, k$ we have $x_i^{d_i} \in
\mathcal{H}$. Then $\kk[V]_G$ is free as a module over its
subalgebra $\mathcal{T}$ generated by $X_0,X_1,\ldots,X_k$, and
$\mathcal{T} \cong
\kk[t_0,\ldots,t_k]/(t_0^{d_0},\ldots,t_k^{d_k})$, where $t_0,
\dots ,t_k$ are independent variables.
\end{Lemma}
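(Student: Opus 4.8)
The plan is to exhibit an explicit vector-space basis of $\kk[V]_G$ that simultaneously displays it as a free $\mathcal{T}$-module and pins down the structure of $\mathcal{T}$. First I would recall from Theorem \ref{main} that, since each $x_i^{d_i}\in\mathcal{H}$ (equivalently $N(x_i)$ may be taken to have leading term $x_i^{d_i}$), the lead term ideal of $\mathcal{H}$ is generated by $x_0^{d_0},\ldots,x_k^{d_k}$ together with a set $S$ of monomials in $A$. Hence the standard monomials — those not in the lead term ideal — are exactly the products $M\,x_0^{a_0}\cdots x_k^{a_k}$ with $M$ a standard monomial in $A$ (i.e.\ $M$ not divisible by any element of $S$) and $0\le a_i<d_i$ for each $i$. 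Let $\Lambda_A$ denote this set of standard monomials in $A$; then $\{\,\overline{M}\,X_0^{a_0}\cdots X_k^{a_k} : M\in\Lambda_A,\ 0\le a_i<d_i\,\}$ is a $\kk$-basis of $\kk[V]_G$, where $\overline{M}$ is the image of $M$.

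Next I would analyse $\mathcal{T}$. It is generated by $X_0,\ldots,X_k$, and since $x_i^{d_i}\in\mathcal{H}$ we have $X_i^{d_i}=0$ in $\kk[V]_G$, so there is a surjection $\kk[t_0,\ldots,t_k]/(t_0^{d_0},\ldots,t_k^{d_k})\twoheadrightarrow\mathcal{T}$ sending $t_i\mapsto X_i$. To see this is an isomorphism it suffices to check that the monomials $X_0^{a_0}\cdots X_k^{a_k}$ with $0\le a_i<d_i$ are $\kk$-linearly independent in $\kk[V]_G$; but these are precisely the basis elements above with $M=1$ (note $1\in\Lambda_A$, since the empty product is never divisible by a monomial in $S$ — $S$ consists of monomials of positive degree, as $\mathcal{H}$ contains no units). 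So $\dim_\kk\mathcal{T}=\prod_{i=0}^k d_i$ and the surjection is an isomorphism.

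Finally I would prove freeness. The claim is that $\kk[V]_G$ is a free $\mathcal{T}$-module with basis $\{\overline{M} : M\in\Lambda_A\}$. That this set spans $\kk[V]_G$ over $\mathcal{T}$ is immediate from the basis description, since $\overline{M}\,X_0^{a_0}\cdots X_k^{a_k}=(X_0^{a_0}\cdots X_k^{a_k})\cdot\overline{M}$. For independence over $\mathcal{T}$: a relation $\sum_{M\in\Lambda_A} c_M(X_0,\ldots,X_k)\,\overline{M}=0$ with $c_M\in\mathcal{T}$ expands, using the monomial basis of $\mathcal{T}$, into a $\kk$-linear relation among the elements $\overline{M}\,X_0^{a_0}\cdots X_k^{a_k}$; by the basis property all coefficients vanish, so each $c_M=0$. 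Hence $\kk[V]_G$ is $\mathcal{T}$-free.

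The main obstacle is really the first step — being sure the lead term ideal of $\mathcal{H}$ has the clean form "pure powers $x_i^{d_i}$ plus monomials in $A$" under the hypothesis $x_i^{d_i}\in\mathcal{H}$, rather than under the a priori stronger hypothesis $d_i=\deg(N^G(x_i))$ used in Theorem \ref{central}(3)(4). This is where I would have to be careful: Theorem \ref{main} gives the lead term ideal in terms of the leading monomials $x_i^{p^{r_i}}$ of $N(x_i)$, and the hypothesis $x_i^{d_i}\in\mathcal{H}$ (with $d_i=p^{r_i}=\deg(x_i)$ by Lemma \ref{power}) guarantees $x_i^{d_i}$ itself lies in the lead term ideal, so no smaller power of $x_i$ can; combined with Theorem \ref{main} this forces the lead term ideal to be exactly $(x_0^{d_0},\ldots,x_k^{d_k})+(\text{monomials in }A)$. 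Once that is nailed down, everything else is the bookkeeping sketched above.
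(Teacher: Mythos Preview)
Your proposal is correct and follows essentially the same route as the paper: use the description of the lead term ideal of $\mathcal{H}$ from Theorem~\ref{main} to obtain the standard-monomial basis $\{\overline{M}\,X_0^{a_0}\cdots X_k^{a_k}\}$, deduce the structure of $\mathcal{T}$ from the linear independence of the $X_0^{a_0}\cdots X_k^{a_k}$, and read off freeness from the factored form of the basis. Your final-paragraph worry is unnecessary: Theorem~\ref{main} already states, without any hypothesis on $x_i^{d_i}\in\mathcal{H}$, that the lead term ideal is generated by $x_0^{p^{r_0}},\ldots,x_k^{p^{r_k}}$ together with monomials in $A$, and by definition $d_i=p^{r_i}$; the hypothesis $x_i^{d_i}\in\mathcal{H}$ is used only to get $X_i^{d_i}=0$ in $\kk[V]_G$.
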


\begin{proof} The hypothesis on the $x_i$ is equivalent to $X_i^{d_i}=0$ in $\kk[V]_G$. Let
$t_0, \dots ,t_k$ be independent variables and consider the
natural surjective ring homomorphism from $\kk [t_0, \dots ,t_k]$
to $\kk [X_0, \dots ,X_k]$. Since $X_i^{d_i}=0$, the kernel of
this map contains $(t_0^{d_0}, \dots , t_k^{d_k})$. If this ideal
is not the all kernel, then $\mathcal{H}$ must contain a
polynomial in $x_0, \dots ,x_k$ such that no monomial in
 this polynomial is divisible by $x_i^{d_i}$ for
$0\le i\le k$. This is a contradiction with the description of the
lead term ideal in Theorem \ref{main}.

 Secondly, let $\Lambda$ denote the set of monomials  in $\kk[V]$ that are not in the
 lead term ideal of $\mathcal{H}$. Then the set of images of monomials in $\Lambda'=\Lambda \cup A$ generate $\kk[V]_G$ over $\mathcal{T}$. Further, they generate freely because
 $Mx_0^{a_0}\cdots x_k^{a_k}\in \Lambda$ for all $M\in \Lambda'$ and $0\le a_i<d_i$ and $0\le i\le k$ and the images of monomials in  $\Lambda$ form a vector space basis
 for $\kk[V]_G$.
\end{proof}

\begin{proof}[Proof of Theorem \ref{central}]
The first two assertions of the theorem are contained in Theorem
\ref{main} and its corollary. Next assume that
$d_i=\deg(N^G(x_i))$ for $0\le i\le k$. So we can take
$N(x_i)=N^G(x_i)$. Then from Theorem \ref{constant} it follows
that $x_i^{d_i}\in \mathcal{H}$ for $0\le i\le k$ since the
constant term of $N^G(x_i)$ (as a polynomial in $x_i$) is zero.
Now the third and the fourth assertions follow from Lemma
\ref{freeness}.
\end{proof}

\section{Complete intersection property of $\mathcal{H}$ }
 In this section we show that if the Hilbert Ideals of two
 modules are generated by fixed points and powers of terminal variables, then so is the Hilbert Ideal of the direct sum.
 As an incidental result we prove that the degree of a terminal variable does not change after taking direct sums.   We
continue with the notation and the convention of the previous
section. Let $V_1$ and $V_2$ be arbitrary $\kk G$-modules. We
choose a basis $x_{1,1}, \dots x_{n_1,1}, y_{1,1},\dots ,y_{m_1,1}
$ for $V_1^*$ and $x_{1,2}, \dots ,x_{n_2,2}, y_{1,2},\dots
,y_{m_2,2}$ for $V_2^*$ such that $x_{1,1}, \dots ,x_{n_1,1},
x_{1,2}, \dots ,x_{n_2,2}$ are fixed points.  Note that both
$\kk[V_1]$ and $\kk[V_2]$ are subrings of $\kk[V_1\oplus V_2]$ and
we identify
$$\kk[V_1\oplus V_2]=\kk [x_{1,1}, \dots ,x_{n_1,1},x_{1,2}, \dots
,x_{n_2,2}, y_{1,1},\dots ,y_{m_1,1}, y_{1,2},\dots ,y_{m_2,2}
].$$ Note that if $y_{i,j}$ is a terminal variable in $V_j^*$ for
some $1\le i\le m_j$, $1\le j\le 2$, then it is also a terminal
variable in $V_1^*\oplus V_2^*$.
\begin{Lemma}\label{sub}
 Assume the notation of the previous paragraph.  Let $y_{i,j}\in V_j^*$ be
a terminal variable. Then the degrees of $y_{i,j}$ in $V_j^*$ and
$V_1^*\oplus V_2^*$ are equal.
\end{Lemma}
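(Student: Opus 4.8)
The plan is to show the two degrees agree by proving inequalities in both directions. Write $d$ for the degree of $y_{i,j}$ computed in $V_j^*$ and $d'$ for the degree computed in $V_1^* \oplus V_2^*$. First I would observe that $\kk[V_j]$ embeds in $\kk[V_1 \oplus V_2]$ and that an invariant of $V_j$ is also an invariant of $V_1 \oplus V_2$ when we give the ``other'' summand's variables the trivial action; concretely, the defining polynomial $N(y_{i,j}) \in \kk[V_j]^G$, which is monic in $y_{i,j}$ of degree $d$, is still an element of $\kk[V_1 \oplus V_2]^G$ that is monic in $y_{i,j}$ of degree $d$. Hence $d' \le d$.

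For the reverse inequality $d \le d'$, the point is to pass from an invariant of $V_1 \oplus V_2$ back down to an invariant of $V_j$ without changing the $y_{i,j}$-degree. Suppose $f \in \kk[V_1 \oplus V_2]^G$ is monic in $y_{i,j}$ of degree $d'$. Substitute zero for all variables not coming from $V_j$ (that is, apply the $G$-equivariant algebra retraction $\kk[V_1 \oplus V_2] \to \kk[V_j]$ that kills the other summand's coordinate functions). Since $y_{i,j}$ is a coordinate on $V_j^*$, this retraction does not touch the monomial $y_{i,j}^{d'}$, so the image $\bar f$ is an element of $\kk[V_j]^G$ that is still monic in $y_{i,j}$ of degree exactly $d'$. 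By minimality of $d$ we get $d \le d'$, and combining the two inequalities gives $d = d'$.

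The only genuine thing to check carefully — and the step I would treat as the main obstacle — is that these substitution/retraction maps really are $G$-equivariant and really do send invariants to invariants, so that the degree-preserving polynomials produced above lie in the correct invariant ring. This is where the hypothesis that $V_1^* \oplus V_2^*$ is a direct sum of $G$-modules (not merely a direct sum of vector spaces) is used: the subspace of $V_1^* \oplus V_2^*$ spanned by the $V_j$-variables is a $G$-submodule and has a $G$-stable complement, so the projection onto it is a morphism of $\kk G$-modules, and the induced algebra map on symmetric algebras is $G$-equivariant and hence restricts to a map on invariants. This is exactly the principle already invoked in the paragraph before Theorem \ref{main} (``setting variables outside of a summand to zero sends invariants to invariants of the summand''), so once that observation is in hand both inequalities are immediate and no computation is needed.
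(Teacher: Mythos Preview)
Your proposal is correct and follows essentially the same two-inequality argument as the paper: the inclusion $\kk[V_j]^G \subseteq \kk[V_1\oplus V_2]^G$ gives $d' \le d$, and the $G$-equivariant restriction (setting the other summand's variables to zero) gives $d \le d'$. Your additional discussion of why the retraction is $G$-equivariant is more explicit than the paper's, but the underlying idea is identical.
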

\begin{proof}
Since $\kk[V_j]^G\subseteq \kk[V_1\oplus V_2]^G$, we have that the
degree of $y_{i,j}$ in $V_j^*$ is bigger than its degree in
$V_1^*\oplus V_2^*$. On the other hand, the restriction map
$\kk[V_1\oplus V_2]^G\rightarrow \kk[V_j]^G$ given $f\rightarrow
f_{|V_j}$ preserves any power of the form $y_{i,j}^d$. This gives
the reverse inequality.
\end{proof}
 We denote the Hilbert Ideals $\kk [V_1\oplus
V_2]^G_+\kk[V_1\oplus V_2]$,  $\kk[V_1]^G_+\kk[V_1]$ and
$\kk[V_2]^G_+\kk[V_2]$ with $\mathcal{H}$, $\mathcal{H}_1$ and
$\mathcal{H}_2$ respectively.
\begin{Theorem}\label{hilbertgens}
  Assume  that $\mathcal{H}_1$ and  $\mathcal{H}_2$ are generated by
the powers of the variables in $V_1^*$ and $V_2^*$, respectively
and that the variables    $ y_{1,1},\dots ,y_{m_1,1},
y_{1,2},\dots ,y_{m_2,2}$ are terminal variables. Then
$\mathcal{H}$ is  generated by the union of the generating sets
for $\mathcal{H}_1$ and  $\mathcal{H}_2$.
\end{Theorem}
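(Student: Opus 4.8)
The plan is to show that every element of $\kk[V_1\oplus V_2]^G$ lies in the ideal generated by the powers of variables that generate $\mathcal{H}_1$ and $\mathcal{H}_2$. By hypothesis, $\mathcal{H}_1$ is generated by powers $x_{i,1}^{a_i}$ (of the fixed-point variables) and $y_{i,1}^{d_{i,1}}$ (of the terminal variables in $V_1^*$), where $d_{i,1}$ is the degree of the terminal variable $y_{i,1}$ in $V_1^*$; by Lemma~\ref{sub}, $d_{i,1}$ is also its degree in $V_1^*\oplus V_2^*$, so each such power is a genuine element of $\mathcal{H}$ (being a power of an invariant, or — if $x_{i,1}$ itself is invariant — itself invariant). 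Similarly for $V_2$. So the asserted generators really do lie in $\mathcal{H}$; the substance is the reverse containment.

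First I would run the polynomial-division argument from the proof of Theorem~\ref{main}. Given $f\in \kk[V_1\oplus V_2]^G$, divide $f$ successively by each power $N(y_{i,j})=y_{i,j}^{d_{i,j}}$ appearing among the given generators (these are monic in the respective terminal variable, with no occurrence of any other terminal variable since by our conventions $N(y_{i,j})$ depends only on variables from the summand containing $y_{i,j}$); since each terminal variable is terminal in $V_1^*\oplus V_2^*$ as well, the quotients and remainders stay invariant. After this reduction I may assume $f$ has degree $<d_{i,j}$ in every terminal variable $y_{i,j}$. Now pick a terminal variable $y_{i,j}$ in which $f$ has nonzero degree $d<d_{i,j}$ and apply Lemma~\ref{ident} with $\Delta=\Delta_{i,j}$: this writes $f$ as its ``constant term'' $f_0\in\kk[V_1\oplus V_2]$ (a polynomial not involving $y_{i,j}$) plus a $\kk[V_1\oplus V_2]$-combination of the invariants $\Delta_{i,j}^t(f)$, each of strictly smaller $y_{i,j}$-degree and with $y_{l,j'}$-degrees unchanged (hence still $<d_{l,j'}$). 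By Lemma~\ref{hilbert} (applicable since $d<d_{i,j}$), $f_0\in\mathcal{H}$. Inducting on the total degree in the terminal variables, I reduce $f$ modulo $\mathcal H$ to a $\kk[V_1\oplus V_2]$-combination of invariants lying in the subring $B:=\kk[x_{1,1},\dots,x_{n_1,1},x_{1,2},\dots,x_{n_2,2}]$ generated by the fixed-point variables.

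It remains to handle an invariant $g\in B$: I must show $g$ lies in the ideal generated by the powers $x_{i,1}^{a_i}$ and $x_{i,2}^{b_i}$ from the two generating sets. Here I would use that $G$ acts trivially on each $x_{i,j}$, so $B$ consists entirely of invariants, and write $g = g_1 g_2$-type decompositions are not available in general, but I can instead observe the following: restricting to $V_1$ (setting all $V_2$-variables to zero) sends $g$ to $g_{|V_1}\in\kk[V_1]^G$, which by hypothesis lies in $\mathcal H_1$; thus $g - h$ vanishes on $V_1$ for some $h$ in the $\mathcal H_1$-generators' ideal, i.e. $g-h$ is divisible by the ideal $(x_{1,2},\dots,x_{n_2,2})$. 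A cleaner route, and the one I expect to be the main obstacle to write carefully, is to pass to lead-term ideals: the lead term ideal of $\mathcal H_1$ is generated by the monomials $x_{i,1}^{a_i}$ and $y_{i,1}^{d_{i,1}}$, similarly for $\mathcal H_2$, and since the two generating sets involve disjoint variable sets their union is automatically a Gröbner basis (all $S$-polynomials of pairs with coprime leading monomials reduce to zero, exactly as in the last paragraph of the proof of Theorem~\ref{main}). So the ideal $I$ they generate has lead term ideal equal to the monomial ideal generated by all the $x_{i,j}^{\ast}$ and $y_{i,j}^{d_{i,j}}$. Comparing Hilbert series of $\kk[V_1\oplus V_2]/I$ and $\kk[V_1\oplus V_2]/\mathcal H$: the former equals $\big(\text{something for the }x\text{'s}\big)\cdot H_{\kk[V_1]/\mathcal H_1}\cdot H_{\kk[V_2]/\mathcal H_2}/(\text{correction})$, and since $I\subseteq\mathcal H$ with — by the reduction above — $\mathcal H$ generated modulo $I$ by elements of $B$ whose images we've shown are forced to be zero by the lead-term description, we get $I=\mathcal H$. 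The delicate point is making the bookkeeping of the fixed-point variables $x_{i,j}$ precise: I would argue that any invariant in $B$ not in the ideal of the pure powers would contradict the Gröbner-basis description of $\mathcal H$'s lead term ideal, exactly mirroring the contradiction used in the proof of Lemma~\ref{freeness}.
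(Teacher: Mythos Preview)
Your reduction via the machinery of Theorem~\ref{main} is a workable strategy, but the final step is left genuinely open. After reducing to an invariant $g$ in the fixed-point subring $B=\kk[x_{i,j}]$, you try restriction maps, Gr\"obner bases, and a Hilbert-series comparison without ever closing the argument. The missing observation is elementary: each $x_{i,j}$ is a \emph{fixed point}, so $x_{i,j}\in\kk[V_j]^G_+\subseteq\mathcal{H}_j$; since $\mathcal{H}_j$ is a monomial ideal, the degree-one monomial $x_{i,j}$ must be divisible by one of the monomial generators, which forces your exponents $a_i,b_i$ to equal~$1$. Hence every positive-degree element of $B$ lies in $(x_{i,j}:i,j)\subseteq I$, and the step you call ``the main obstacle'' is in fact trivial. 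Two smaller slips along the way: you write $N(y_{i,j})=y_{i,j}^{d_{i,j}}$, but $N(y_{i,j})$ is by definition an \emph{invariant} and the pure power generally is not (what you need is only that both lie in $I$, the former because $N(y_{i,j})\in\mathcal{H}_j$); and you assume without argument that the exponent $d_{i,j}$ in the given generating set equals $\deg(y_{i,j})$, which does require the short two-sided estimate the paper supplies.

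For comparison, the paper's proof is different and shorter. After checking that $d_{i,j}=\deg(y_{i,j})$ (in $V_j^*$ and hence, by Lemma~\ref{sub}, in $V_1^*\oplus V_2^*$), it argues directly by contradiction: if the union fails to generate $\mathcal{H}$, then since the union spans a monomial ideal containing every $x_{i,j}$, some $f\in\mathcal{H}$ must contain a nonconstant monomial $\prod y_{i,j}^{e_{i,j}}$ with each $e_{i,j}<d_{i,j}$. Applying the operators $\Delta_{i,j}^{e_{i,j}}$ in succession keeps $f$ inside $\mathcal{H}$ by Lemma~\ref{hilbert}, yet sends that monomial to a nonzero constant---contradiction. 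This is the same $\Delta$-mechanism you invoke, but used once at the end rather than threaded through an induction modelled on Theorem~\ref{main}.
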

\begin{proof}
Assume that $\mathcal{H}_1$ is generated by $x_{1,1}, \dots
,x_{n_1,1}, y_{1,1}^{d_{1,1}},\dots ,y_{m_1,1}^{d_{m,1}}$ and
$\mathcal{H}_2$ is generated by $x_{1,2}, \dots ,x_{n_2,2},
y_{1,2}^{d_{1,2}},\dots ,y_{m_2,2}^{d_{m,2}}$. We show that
$d_{i,j}$ is equal to the degree of the variable $y_{i,j}$ for
$1\le i\le m_j$ and $1\le j\le 2$. For simplicity we set $i=j=1$
and denote the degree of $y_{1,1}$ with $d$. Since $\mathcal{H}_1$
is generated by monomials, each monomial in a polynomial in
$\mathcal{H}_1$ is divisible by one of its monomial generators. So
we get $d_{1,1}\le d$. On the other hand, since
$y_{1,1}^{d_{1,1}}$ is a member of $\mathcal{H}_1$ there is  a
positive degree invariant with a monomial that divides
$y_{1,1}^{d_{1,1}}$. So by the minimality of $d$, we get $d\le
d_{1,1}$ as well. By Lemma \ref{sub}, $d_{i,j}$  is also equal to
the degree of $y_{i,j}$ in $\kk [V_1\oplus V_2]^G$. We claim that
the union of the generating sets for $\mathcal{H}_1$ and
$\mathcal{H}_2$ generate $\mathcal{H}$. Otherwise, there exists a
polynomial $f$ in $\mathcal{H}$ that contains a non-constant
monomial $\prod_{1\le i\le m_j, 1\le j\le 2}y_{i,j}^{e_{i,j}}$
with $0\le e_{i,j}< d_{i,j}$. Let $\Delta_{i,j}$ denote the
derivation with respect to the terminal variable $y_{i,j}$. Then
applying $\Delta_{i,j}^{e_{i,j}}$ successively to $f$ for $1\le
i\le m_j, 1\le j\le 2$ yields an invariant with a non-zero
constant. This is a contradiction by Lemma \ref{hilbert} since
$e_{i,j}< d_{i,j}$.
\end{proof}

We end this section with an example which shows that the degree of a terminal variable may be strictly less than the degree of its norm:
\begin{eg} Let $H = \langle \sigma, \tau \rangle$ be the Klein 4-group, $\kk$ a field of characteristic 2 and $m \geq 2$. Let
$\Omega^{-m}(\kk)$ be a vector space of dimension $m =2n+1$ over
$\kk$. Choose a basis $\{x_1,x_2,\ldots,
x_{m},y_{1},y_2,\ldots,y_{m+1}\}$ of $V^*$. One can define an
action of $H$ on $V$ in such a way that its action on $V^*$ is
given by $\sigma(y_j) = y_j+x_j, \sigma(x_j) = x_j, \tau(y_j) =
y_j+x_{j-1}, \tau(x_j)= x_j$ using the convention that $x_{0} =
x_{m} = 0$.

The variables $y_1,y_2,\ldots ,y_{m+1}$ are terminal. One can
readily check that
\[y_2^2+x_2y_2+x_1y_2+x_2y_1+x_1y_3+y_1x_3\] is invariant under $H$ (note the last term is zero if $m=2$), so $y_2$ has degree 2.
On the other hand, $y_2$ is not fixed by either $\sigma$ or
$\tau$, which means $N^H(y_2)$ has degree 4.  It is interesting to
note that
  $x_1y_2+x_2y_1+x_1y_3+y_1x_3 \in \mathcal{H}$, so we still have $y_2^2 \in \mathcal{H}$.
\end{eg}

\section{Cyclic $p$-groups and  the  Klein 4-group }
In this section we apply the results of the previous sections to
cyclic $p$-groups and the Klein 4-group. Let $G=Z_{p^{r}}$ denote
a cyclic group of order $p^r$. Fix a generator $\sigma$ of $G$.
There are  $p^r$ indecomposable $\kk G$-modules $V_1, \dots ,
V_{p^r}$ over $\kk$, and each indecomposable module $V_i$ is
afforded by $\sigma^{-1}$ acting via a Jordan block of dimension
$i$ with ones on the diagonal. For  an arbitrary $\kk G$-module
$V$, we write
\[
\quad\quad\quad V=\bigoplus_{i=0}^k V_{n_i} \quad\quad \text{
(with } 1\le n_{i}\le p^{r} \text { for all } i),
\]
where each $V_{n_i}$ is spanned as a vector space by $e_{1,i},
\dots , e_{n_i, i}$. Then the action of $\sigma^{-1}$ is given by
$\sigma^{-1}(e_{j,i})=e_{j,i}+e_{j+1,i}$ for $1\le j < n_i$ and
$\sigma^{-1} (e_{n_i,i})=e_{n_i,i}$. Note that the fixed point
space $V^G$ is $\kk$-linearly spanned by $e_{n_1,0}, \dots
,e_{n_k,k}$. The dual $V_{n_i}^*$ is isomorphic to $V_{n_i}$. Let
$x_{1,i}, \dots x_{n_i,i}$ denote the corresponding dual basis,
then we have
\[
\kk[V]=\kk[x_{j,i} \; \mid \,\, 1\le j\le n_i, \,\,\, 0\le i\le
k],
\]
and the action of $\sigma$ is given by
$\sigma(x_{j,i})=x_{j,i}+x_{j-1,i}$ for $1<j\le n_i$ and
$\sigma(x_{1,i}) =x_{1,i}$ for $0\le i\le k$.  Notice that the
variables $x_{n_i,i}$ for $0\le i\le k$ are terminal variables. We
follow the notation of Section \ref{two}
 and denote $x_{n_i,i}$ with $x_i$.
 We show that Theorem \ref{central} applies completely to $G$ by
 computing $\deg (x_i)$ explicitly for $0 \le i\le k$. For each $0\le i\le k $, let $a_i$ denote the largest integer such that
$n_i>p^{a_i-1}$.
 \begin{Lemma} We have $\deg(x_i)=p^{a_i}$. In particular, We may
take $N(x_i)=N^G(x_i)$.
 \end{Lemma}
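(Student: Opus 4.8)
The plan is to establish the two inequalities $\deg(x_i)\le p^{a_i}$ and $\deg(x_i)\ge p^{a_i}$ separately.

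\smallskip

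\emph{Upper bound.} It suffices to show $\deg(N^G(x_i))=[G:G_{x_i}]=p^{a_i}$, since restricting $N^G(x_i)$ to the summand containing $x_i$ keeps it monic of the same degree in $x_i$. On $V_{n_i}^*$ the generator $\sigma$ acts by $\sigma(x_{j,i})=x_{j,i}+x_{j-1,i}$ (with $x_{0,i}=0$), i.e. $\sigma=1+N$ where $N$ is the shift $x_{j,i}\mapsto x_{j-1,i}$; hence in characteristic $p$ one has $\sigma^{p^b}=1+N^{p^b}$, so $\sigma^{p^b}(x_{n_i,i})=x_{n_i,i}+x_{n_i-p^b,i}$, which equals $x_{n_i,i}$ exactly when $p^b\ge n_i$. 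As the subgroups of the cyclic $p$-group $G$ are precisely the $\langle\sigma^{p^b}\rangle$, this forces $G_{x_i}=\langle\sigma^{p^{a_i}}\rangle$ (note $a_i\le r$ since $n_i\le p^r$), so $[G:G_{x_i}]=p^{a_i}$ and $\deg(x_i)\le p^{a_i}$.

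\smallskip

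\emph{Lower bound.} By Lemma \ref{power} we have $\deg(x_i)=p^s$; we already know $s\le a_i$, so suppose $s<a_i$, i.e. $p^s\le p^{a_i-1}<n_i$. Restricting an invariant to $V_{n_i}$ does not alter its leading coefficient as a polynomial in $x_i$, so it is enough to prove: $(\ast)$ \emph{for the indecomposable $\kk G$-module $V_m$ with terminal variable $x_m$, and for every $p$-power $p^t<m$, no $G$-invariant of $\kk[V_m]$ is monic of degree $p^t$ in $x_m$.} I would prove $(\ast)$ by induction on $m$. Passing to homogeneous components, one may assume such an $f$ is homogeneous of degree $p^t$. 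If $m>p^t+1$, restrict $f$ to the submodule spanned by the last $p^t+1$ basis vectors, which is isomorphic to $V_{p^t+1}$; the restriction is monic of degree $p^t$ in the terminal variable of $V_{p^t+1}$, contradicting $(\ast)$ for $V_{p^t+1}$ (valid as $p^t+1<m$). So we may assume $m=p^t+1$. For $t=0$ this is immediate: $f=x_2+c$ with $c\in\kk[x_1]$ gives $\sigma(f)-f=x_1\ne 0$. For $t\ge1$, $f$ is invariant under $\sigma^{p^t}$, which fixes $x_1,\dots,x_{p^t}$ and sends $x_{p^t+1}\mapsto x_{p^t+1}+x_1$; hence $f\in\kk[x_1,\dots,x_{p^t}][u]$ with $u=N^{\langle\sigma^{p^t}\rangle}(x_{p^t+1})=x_{p^t+1}^{\,p}-x_1^{\,p-1}x_{p^t+1}$, and comparing $x_{p^t+1}$-degrees ($u$ being monic of degree $p$ in $x_{p^t+1}$) forces $f=u^{p^{t-1}}+(\text{lower order in }u)$. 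For $t=1$ this already finishes: the coefficient of $u^0$ then has degree $p-p=0$, hence is a constant, so $f=u$, which is not $\sigma$-invariant since $\sigma(u)=u+\bigl(x_{p}^{\,p}-x_1^{\,p-1}x_{p}\bigr)\ne u$.

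\smallskip

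The remaining case $t\ge2$ is where the difficulty lies, and the plan is a one-level descent. The elements $u_j:=x_j^{\,p}-x_1^{\,p-1}x_j$ for $2\le j\le p^t+1$ satisfy $\sigma(u_j)=u_j+u_{j-1}$ and $\sigma(u_2)=u_2$, so they span a $\langle\sigma\rangle$-submodule of $\kk[V_{p^t+1}]$ isomorphic to $V_{p^t}$, with $u_{p^t+1}=u$ as its terminal variable; by the inductive hypothesis $\deg_{V_{p^t}}(u)=p^t$. Writing $f=\sum_i g_i u^i$ with $g_i\in\kk[x_1,\dots,x_{p^t}]$ and imposing $\sigma(f)=f$ yields, for each $l$, the recursion $g_l=\sum_{i\ge l}\binom{i}{l}\,\sigma(g_i)\,u_{p^t}^{\,i-l}$. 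If one can deduce from these that $f$ in fact lies in the polynomial subring $\kk[u_2,\dots,u_{p^t+1}]$, then $f$ is a $\langle\sigma\rangle$-invariant of $V_{p^t}$ monic of degree $p^{t-1}<p^t$ in its terminal variable, contradicting $(\ast)$ for $V_{p^t}$; I expect this deduction to be the main obstacle, because the recursion only forces $(1-\sigma)(g_l)$ into $\kk[u_2,\dots,u_{p^t}]$ and not $g_l$ itself — the $g_l$ may carry a component built from $x_1^{\,p}$-powers that is invisible to $1-\sigma$. The workaround is either to track and eliminate that component degree by degree, or, equivalently, to argue directly that the term $u_{p^t}^{\,p^{t-1}}$ forced into $(1-\sigma)(g_0)$ cannot lie in the image of $1-\sigma$ on the relevant graded piece; the case $m=3$, where the obstruction is simply the $x_2^{\,2}$-coefficient, already exhibits the mechanism.
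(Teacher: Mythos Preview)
Your upper bound is correct and is exactly what the paper does: the Jordan block of size $n_i$ has order $p^{a_i}$, so $|G\cdot x_i|=[G:G_{x_i}]=p^{a_i}$ and $N^G(x_i)$ is monic of degree $p^{a_i}$ in $x_i$, giving $\deg(x_i)\le p^{a_i}$.

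For the lower bound the paper takes a completely different and much shorter route: it simply quotes \cite[Lemma~3]{KohlsSezerDegRed}, which already gives $\deg(x_i)\ge p^{a_i}$. Your attempt at a self-contained inductive proof is a genuinely different approach, but it has real gaps.

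First, the $t=1$ step is wrong. With $m=p+1$ you write $f=u+g_0$ where $g_0\in\kk[x_1,\dots,x_p]$, and then claim ``the coefficient of $u^0$ has degree $p-p=0$, hence is a constant, so $f=u$''. But $f$ and $u$ are both homogeneous of \emph{total} degree $p$, so $g_0$ is homogeneous of total degree $p$ (or zero), not a constant. The conclusion $f=u$ does not follow; what one actually needs is that $(\sigma-1)(g_0)=-(x_p^{\,p}-x_1^{p-1}x_p)$ has no solution $g_0\in\kk[x_1,\dots,x_p]_p$. This is true, but it requires an argument (for instance, the image of $\sigma-1$ on that graded piece misses the monomial $x_p^{\,p}$), and you have not given one.

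Second, the $t\ge 2$ case is, by your own admission, only a plan. The whole induction hinges on showing that $f$ lies in the subring $\kk[u_2,\dots,u_{p^t+1}]$, so that the isomorphism with $\kk[V_{p^t}]$ lets you invoke the inductive hypothesis; you correctly identify that the recursion on the $g_l$ does not force this, and your proposed ``workarounds'' are not carried out. This is the heart of the argument, not a detail, and without it the induction does not close. The paper sidesteps all of this by importing the bound from the cited reference.
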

\begin{proof}
  From \cite[Lemma 3]{KohlsSezerDegRed} we get that $\deg(x_i)$ is at least
  $p^{a_i}$. On the other hand since $p^{a_i}\ge n_i>p^{a_i-1}$, a Jordan block of size
  $n_i$ has order $p^{a_i}$. That is, this block affords a faithful module of the subgroup of $G$ of size $p^{a_i}$.
   It follows that the orbit of $x_i$ has $p^{a_i}$ elements and
   so that the orbit product $N^G(x_i)$ is a monic polynomial that is of degree
   $p^{a_i}$ in $x_i$.
\end{proof}
 Applying Theorem \ref{central}, we obtain the
following.
\begin{Proposition} Assume the notation of Theorem \ref{central} with specialization  $G=Z_{p^{r}}$.  We have an isomorphism
$$\kk [X_0, \dots ,X_k]\cong \kk [t_0, \dots ,t_k]/(t_0^{p^{a_0}}, \dots , t_k^{p^{a_k}}).$$
Moreover, $\kk[V]_G$ is free as a module over $\kk [X_0, \dots
,X_k]$. \qed
\end{Proposition}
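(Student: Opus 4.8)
The plan is to apply Theorem \ref{central} essentially verbatim, once the hypothesis $d_i = \deg(N^G(x_i))$ has been verified for the cyclic group $G = Z_{p^r}$. The preceding lemma does exactly this: it shows $\deg(x_i) = p^{a_i}$ and that one may take $N(x_i) = N^G(x_i)$, so the extra hypothesis of Theorem \ref{central} is satisfied with $d_i = p^{a_i}$ for each $0 \le i \le k$.

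Concretely, I would argue as follows. First, recall from the setup of Section 3 that in the chosen basis each $x_i = x_{n_i,i}$ is a terminal variable coming from the indecomposable summand $V_{n_i}$, and these summands are pairwise distinct summands of $V$. Hence the basis $x_0, x_1, \dots, x_k$ (together with the non-terminal variables, which generate the subalgebra $A$) is of exactly the type required in Theorem \ref{central}. By the lemma just proved, $\deg(x_i) = p^{a_i} = \deg(N^G(x_i))$, so conclusions (3) and (4) of Theorem \ref{central} apply. Conclusion (4) gives $\mathcal{T} = \kk[X_0,\dots,X_k] \cong \kk[t_0,\dots,t_k]/(t_0^{p^{a_0}},\dots,t_k^{p^{a_k}})$, and conclusion (3) gives that $\kk[V]_G$ is free as a module over $\mathcal{T} = \kk[X_0,\dots,X_k]$. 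That is precisely the assertion of the Proposition.

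Since Theorem \ref{central} is already available, there is essentially no obstacle here — the only thing to check is that the hypotheses of Theorem \ref{central} genuinely hold in the cyclic case, and this is supplied by the lemma. The proof is therefore just the invocation:

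\begin{proof}
By the previous lemma, $\deg(x_i) = p^{a_i} = \deg(N^G(x_i))$ for $0 \le i \le k$, so we may take $N(x_i) = N^G(x_i)$ and the additional hypothesis of Theorem \ref{central} is satisfied with $d_i = p^{a_i}$. Parts (3) and (4) of Theorem \ref{central} now give exactly the claimed isomorphism $\kk[X_0,\dots,X_k] \cong \kk[t_0,\dots,t_k]/(t_0^{p^{a_0}},\dots,t_k^{p^{a_k}})$ and the freeness of $\kk[V]_G$ as a module over $\kk[X_0,\dots,X_k]$.
\end{proof}

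If one wanted to make the write-up slightly more self-contained, the one genuinely substantive input worth recalling explicitly is that the constant term (in $x_i$) of the orbit product $N^G(x_i)$ vanishes: indeed $\sigma(x_i) = x_i + x_{n_i - 1, i}$ moves $x_i$, so $x_i$ does not lie in the orbit-fixing subgroup, and the product $\prod_{h \in G\cdot x_i} h$ therefore has every factor involving $x_i$ nontrivially — hence its $x_i^0$-coefficient is $0$. This is what feeds into Theorem \ref{constant} to give $x_i^{d_i} \in \mathcal{H}$, which is the hypothesis of Lemma \ref{freeness} underlying conclusions (3) and (4); but since all of this is already packaged inside Theorem \ref{central}, the short proof above suffices.
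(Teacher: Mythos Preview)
Your proof is correct and matches the paper's own argument exactly: the paper simply writes ``Applying Theorem \ref{central}, we obtain the following'' after the lemma establishing $\deg(x_i)=p^{a_i}=\deg(N^G(x_i))$, which is precisely what you do. Your additional remark about the constant term of $N^G(x_i)$ vanishing is also fine (though the cleanest reason is that $x_i$ itself is one of the orbit elements, hence one of the factors), and as you note it is already subsumed in the proof of Theorem \ref{central}.
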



Now  let $H$ denote the Klein 4-group and $p=2$. For each
indecomposable $\kk H$-module $V$ there exists a basis of $V^*$
with one of the terminal variables $x_i$ satisfying $\deg
(x_i)=[H:H_{x_i}]$, see \cite{SezerShank}. In this source it is
also proven  that, with the exception of the regular module, each
basis consists of fixed points and the terminal variables and the
Hilbert Ideal of every such module is generated by fixed points
and the powers of the terminal variables. So we have by Theorem
\ref{central} and Theorem \ref{hilbertgens}:

\begin{Proposition} Let $V$ be  a $\kk H$-module
   containing $k+1$ indecomposable summands.
   There is a
   basis $\{x_0,x_1,\ldots, x_n\}$ of $V^*$ in which $x_0,x_1,\ldots, x_k$ are terminal variables, each coming from one summand,
such that $\kk[V]_H$ is free as a module over its subalgebra
$\mathcal{T}$ generated by the images $X_0,X_1,\ldots,X_k$ of the
terminal variables. Moreover,  $\mathcal{T} \cong
\kk[t_0,\ldots,t_k]/(t_0^{a_0},\ldots,t_k^{a_k})$, where $t_0,
\dots ,t_k$ are independent variables, and for each $i$ we have
$a_i$ = 2 or 4.
\end{Proposition}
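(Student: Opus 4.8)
The plan is to assemble the statement for the decomposable module $V$ out of the indecomposable case treated in \cite{SezerShank} together with Theorem \ref{central}. Following the standing convention of Section \ref{two}, I would write $V=\bigoplus_{i=0}^{k}V_i$ with each $V_i$ a non-trivial indecomposable $\kk H$-module. For each $i$, \cite{SezerShank} provides a basis of $V_i^*$ consisting of fixed points and terminal variables and containing a distinguished terminal variable $x_i$ with $\deg(x_i)=[H:H_{x_i}]$; by that construction $x_i$ is not $H$-invariant, so the index $[H:H_{x_i}]$, being a divisor of $|H|=4$, equals $2$ or $4$. Then I let $\{x_0,\ldots,x_n\}$ be the union of these bases: as noted just before Lemma \ref{sub}, a variable that is terminal in its summand stays terminal in $V^*$, so in particular $x_0,\ldots,x_k$ are terminal in $V^*$, each coming from one summand, and the set-up of Theorem \ref{central} is in place with $d_i:=\deg(x_i)=a_i$.

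It then remains only to verify the extra hypothesis of Theorem \ref{central}, that $d_i=\deg(N^H(x_i))$ in $\kk[V]$ for $0\le i\le k$. By Lemma \ref{sub} the degree of $x_i$ computed in $V^*$ equals its degree in $V_i^*$, hence equals $[H:H_{x_i}]$. On the other hand, terminality of $x_i$ forces every element of the $H$-orbit of $x_i$ to be $x_i$ plus a linear form in the remaining variables (the one-dimensional quotient $V^*/\langle x_j:j\ne i\rangle$ carries the trivial action), so $N^H(x_i)=\prod_{g\in H\cdot x_i}g$ is monic of degree $|H\cdot x_i|=[H:H_{x_i}]$ as a polynomial in $x_i$; thus $\deg(N^H(x_i))=[H:H_{x_i}]=d_i$. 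Now parts (3) and (4) of Theorem \ref{central} apply and give precisely that $\kk[V]_H$ is free over $\mathcal{T}=\kk[X_0,\ldots,X_k]$ with $\mathcal{T}\cong\kk[t_0,\ldots,t_k]/(t_0^{a_0},\ldots,t_k^{a_k})$ and each $a_i\in\{2,4\}$, which is the assertion. (Equivalently, since $x_i$ lies in its own orbit the constant term of $N^H(x_i)$ in $x_i$ vanishes, so Theorem \ref{constant} gives $x_i^{a_i}\in\mathcal{H}$ and Lemma \ref{freeness} finishes.)

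I do not expect any serious obstacle in this argument itself: the real work is the indecomposable classification of \cite{SezerShank}, and on our side the only nontrivial point is the stability of the degree $[H:H_{x_i}]$ under passing to the direct sum, handled by Lemma \ref{sub} and the orbit description of $N^H(x_i)$ above. The one place where a genuine restriction intervenes is the companion fact, used in the same section, that $\mathcal{H}$ is generated by the fixed points and the powers of the terminal variables of $V^*$ (and hence is a complete intersection): there one needs each $\mathcal{H}_i:=\kk[V_i]^H_+\kk[V_i]$ to be generated by powers of variables and then propagates this to $\mathcal{H}$ by an iterated application of Theorem \ref{hilbertgens}, and the hypothesis of Theorem \ref{hilbertgens} genuinely fails for the regular module — so that description of $\mathcal{H}$ requires $V$ to have no regular summand, even though the freeness statement of the Proposition itself does not.
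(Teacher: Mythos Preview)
Your argument is correct and is essentially the paper's own proof, carried out in full detail: the paper simply cites Theorem~\ref{central} (together with the input from \cite{SezerShank} that each indecomposable $\kk H$-module admits a terminal variable $x_i$ with $\deg(x_i)=[H:H_{x_i}]$), and you have spelled out the one bridging step---using Lemma~\ref{sub} and the orbit description of $N^H(x_i)$ to check that $d_i=\deg(N^H(x_i))$ survives passage to the direct sum. Your closing remark is also on point: the paper's reference to Theorem~\ref{hilbertgens} is for the companion complete-intersection statement, not for this Proposition, and the freeness assertion here indeed does not require excluding regular summands.
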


\begin{Proposition}
Let  $V$ be  a $\kk H$-module such that $V$ does not contain the
regular module $\kk H$ as a summand. Then there exists a  basis of
$V^*$ such that $\kk[V]^H_+\kk[V]$ is generated by powers of basis
elements. In particular, $\kk[V]^H_+\kk[V]$ is a complete
intersection.
\end{Proposition}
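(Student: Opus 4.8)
The plan is to reduce to the indecomposable case handled in \cite{SezerShank} and then splice the pieces together with Theorem \ref{hilbertgens}. First I would write $V = \bigoplus_{i=0}^{k} V_i$ as a direct sum of indecomposable $\kk H$-modules. Since $V$ does not contain the regular module $\kk H$ as a summand, none of the $V_i$ is isomorphic to $\kk H$; trivial summands may be discarded as they do not affect the Hilbert Ideal. By \cite{SezerShank}, for each non-regular indecomposable $\kk H$-module $V_i$ there is a basis of $V_i^*$ consisting of fixed points and terminal variables, and the Hilbert Ideal $\mathcal{H}_i = \kk[V_i]^H_+\kk[V_i]$ is generated by powers of these basis elements (the fixed points to the first power). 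Concatenating these bases gives a basis of $V^*$; one checks that each terminal variable of a summand remains terminal in $V^*$ (this is the remark preceding Lemma \ref{sub}), and similarly each fixed point of a summand stays a fixed point in $V^*$.

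Next I would apply Theorem \ref{hilbertgens} directly with this decomposition (using induction on $k$ to pass from two summands to finitely many, or simply observing the proof of Theorem \ref{hilbertgens} goes through verbatim for a finite direct sum). The hypotheses are exactly met: each $\mathcal{H}_i$ is generated by powers of the chosen basis variables of $V_i^*$, and all the non-fixed basis variables are terminal. The conclusion is that $\mathcal{H} = \kk[V]^H_+\kk[V]$ is generated by the union of these generating sets, hence by powers of basis elements of $V^*$ — namely the fixed points $x_{j,i}$ (to the first power) and the terminal variables $y_{j,i}$ to the powers $y_{j,i}^{d_{j,i}}$, where $d_{j,i} = \deg(y_{j,i})$.

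Finally, for the complete intersection claim: $\kk[V]$ is a polynomial ring in $N := \dim V$ variables, and $\mathcal{H}$ is now exhibited as an ideal generated by $N$ elements, one pure power of each distinct variable (with exponent $1$ on the fixed-point variables). An ideal in a polynomial ring in $N$ variables generated by $N$ elements forming a regular sequence is a complete intersection; and pure powers $x_1^{b_1},\ldots,x_N^{b_N}$ of the distinct variables always form a regular sequence (the quotient $\kk[x_1,\ldots,x_N]/(x_1^{b_1},\ldots,x_N^{b_N})$ has Krull dimension $0 = N - N$, which forces the generators to be a regular sequence, or one argues directly). Hence $\kk[V]_H$ is a complete intersection.

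I do not anticipate a serious obstacle; the only point requiring a little care is the bookkeeping that stability of "terminal variable" and "fixed point" survives passage to the direct sum, and that Theorem \ref{hilbertgens}, stated for two summands, applies to an arbitrary finite number — both are routine. The substantive input, that each non-regular indecomposable $\kk H$-module has a Hilbert Ideal generated by powers of an appropriate basis, is imported wholesale from \cite{SezerShank}.
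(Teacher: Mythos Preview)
Your proposal is correct and follows exactly the route the paper takes: cite \cite{SezerShank} for the indecomposable (non-regular) case to obtain bases of fixed points and terminal variables with Hilbert Ideals generated by powers, then apply Theorem \ref{hilbertgens} to assemble the direct sum. The paper simply says ``So we have by Theorem \ref{central} and Theorem \ref{hilbertgens}'' with no further argument, so your write-up is in fact more detailed than the original.
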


\bibliographystyle{siam}
\bibliography{MyBib}

\end{document}